\newtheorem{Theorem}{Theorem}
\newtheorem{Assumption}{Assumption}
\newtheorem{Definition}{Definition}
\newtheorem{Lemma}{Lemma}
\newtheorem{Proposition}{Proposition}
\newtheorem{Remark}{Remark}
\newcommand{\norm}[1]{\left\|#1\right\|}%
\newcommand{\an}{{\mbox{ and }}}
\newcommand{\inr}[1]{\bigl< #1 \bigr>}
\newcommand{\pa}[1]{\left(#1\right)}
\newcommand{\cro}[1]{\left\{#1\right\}}
\newcommand\eps{\epsilon}
\newcommand{\est}{{\hat f}_{K,\lambda}}%
\newcommand{\esti}{{\hat f}_{K}}
\def \endproof
\DeclareMathOperator*{\argmin}{argmin}
\def\ds1{\textrm{1\kern-0.25emI}} 
\newcommand \E{\mathbb{E}}
\newcommand \R{\mathbb{R}}
\newcommand \N{\mathbb{N}}
\newcommand \cB{{\cal B}}
\newcommand \cC{{\cal C}}
\newcommand \cF{{\cal F}}
\newcommand \cG{{\cal G}}
\newcommand \cI{{\cal I}}
\newcommand \cJ{{\cal J}}
\newcommand \cK{{\cal K}}
\newcommand \cO{{\cal O}}
\newcommand \cX{{\cal X}}
\newcommand \bE{{\mathbb E}}
\newcommand \bP{{\mathbb P}}
\newcommand \fM{{\mathfrak M}}
\newcommand \fQ{{\mathfrak Q}}
\newcommand{\hf}{ {\hat f}}
\newcommand{\bayes}{f^*}
\newcommand{\ESTI}[2]{\widehat f_{#1}^{(#2)}}
\newcommand{\MOM}[2]{\text{MOM}_{#1}\left[#2\right]}
\newcommand{\cabs}[1]{c_{#1}}
\newcommand{\param}[1]{\theta_{#1}}
\begin{document}

\begin{frontmatter}



\title{Learning from MOM's principles : Le Cam's approach}


\author[label1]{Guillaume Lecu\'e}
\ead[label1]{guillaume.lecue@ensae.fr}
\address[label1]{CREST, CNRS, Universit\'e Paris Saclay}

\author[label2]{Matthieu Lerasle}
\ead[label2]{matthieu.lerasle@math.u-psud.fr}
\address[label2]{Laboratoire de Mathématiques d'Orsay, Univ. Paris-Sud, CNRS, Universit\'e Paris Saclay}

\begin{abstract}
We obtain estimation error rates for estimators obtained by aggregation of regularized median-of-means tests, following a construction of Le Cam. The results hold with exponentially large probability,
 under only weak moments assumptions on data.
Any norm may be used for regularization. When it has some sparsity inducing power we recover sparse rates of convergence. 
The procedure is robust since a large part of data may be corrupted, these outliers have nothing to do with the oracle we want to reconstruct. Our general risk bound is of order
\begin{equation*}
\max\left(\mbox{minimax rate in the i.i.d. setup}, \frac{\mbox{number of outliers}}{\mbox{number of observations}}\right)  \enspace.
\end{equation*}In particular, the number of outliers  may be as large as \textit{(number of data) $\times$(minimax rate)} without affecting this rate. The other data do not have to be identically distributed but should only have equivalent $L^1$ and $L^2$ moments.
For example, the minimax rate $s \log(ed/s)/N$ of recovery of a $s$-sparse vector in $\R^d$ is achieved with exponentially large probability by a median-of-means version of the LASSO when the noise has $q_0$ moments for some $q_0>2$, the entries of the design matrix should have  $C_0\log(ed)$ moments and the dataset can be corrupted up to $C_1 s \log(ed/s)$ outliers.
\end{abstract}

\begin{keyword}
robust statistics \sep statistical learning \sep high dimensional statistics.


\MSC[2010]  62G35  \sep 62G08.

\end{keyword}

\end{frontmatter}

\section{Introduction}
Consider the problem of estimating minimizers of the integrated square-loss over a convex class of functions : $\bayes\in\argmin_{f\in F}P(Y-f(X))^2$ based on a data set $(X_i,Y_i)_{i=1,\ldots, N}$. The labels $Y$ and $Y_i$'s are real-valued while the inputs $X$ and $X_i$'s take values in an abstract measurable space $\cX$.

 Empirical Risk Minimizers (ERM) of \cite{MR1641250, MR0474638} and later on, their regularized versions replace the unknown distribution $P$ in the definition of $\bayes$ by the empirical distribution $P_N$ based on the sample $(X_i,Y_i)_{i=1,\ldots, N}$. Given a function ${\rm reg}: F\to \R_+$, this produces regularized ERM defined by 
\[
\hat f^{\text{RERM}}_N\in\argmin_{f\in F}\{P_N(Y-f(X))^2+{\rm reg}(f)\}\enspace.
\]
These estimators are optimal in i.i.d. subgaussian setups but suffer several drawbacks when data are heavy-tailed or corrupted by ``outliers", see \cite{MR3052407,HubRonch2009}. These issues are critical in many modern applications such as high-frequency trading, where heavy-tailed data are quite common or in various areas of biology such as micro-array analysis or neuroscience where data are sometimes still nasty after being preprocessed. To overcome the problem, various methods have been proposed. The most common strategy is to replace the square-loss function to make it less sensitive to outliers. For example, \cite{MR0161415} proposed a loss that
interpolates between square and absolute loss to produce an estimator between the unbiased (but non robust) empirical mean and the (more robust but biased) empirical median. Huber's estimators have been intensively studied asymptotically by \cite{MR0161415,HubRonch2009}, non-asymptotic results have also been obtained more recently by \cite{MR3217454,shahar_general_loss, FanLiWang2016} for example. An alternative approach has been proposed by \cite{MR3052407} and used in learning frameworks such as least-squares regression by \cite{MR2906886} and for more general loss functions by \cite{MR3405602}.

Another line of research to build robust estimators and robust selection procedures was initiated by \cite{MR0334381, MR856411} and further developed by \cite{MR2219712}, \cite{MR2834722} and \cite{BaraudBirgeSart}. It is based on \emph{comparisons} or \emph{tests} between elements of $F$. More precisely, the approach builds on tests statistics $T_N(g,f)$ comparing $f$ and $g$. These tests define the sets $\cB_{T_N}(f)$ of all $g$'s that have been preferred to $f$ and the final estimator $\hat f$ is a minimizer of the diameter of $\cB_{T_N}(f)$. The measure of diameter is directly related to statistical performances one seeks for the estimator. These methods mostly focus on Hellinger loss and are generally considered difficult to compute, see however \cite{MR3224300, MR3224298}.

In a related but different approach, \cite{LugosiMendelson2016} have recently introduced ``median-of-means tournaments". Median-of-means estimators of \cite{MR1688610, MR855970, MR702836} compare elements of $F$. A ``champion" is an element $\hat f$ such that $\cB_{T_N}(\hat f)$ is smaller than a computable upper bound on the radius of $\cB_{T_N}(\bayes)$. They prove that the risk of any champion is controlled by this upper bound. An important message of this paper is that Le Cam's estimators are quite common in statistics, in particular in robust statistics. For example, Section~\ref{Sec:LFT} shows that any penalized empirical loss function can be obtained by Le Cam's approach and that Le Cam's estimators based on median-of-means tests are champions of median-of-means tournaments.

This paper studies estimators derived from Le Cam's procedure based on regularized median-of-means (MOM) tests (see Section~\ref{Sec:QOMProcesses}). Our estimators are therefore particular instances of champions of MOM's tournaments and another motivation is to push further the analysis of this particular champion. The main advantage of MOM's tests over Le Cam's original ones is that they allow for more classical loss functions than Hellinger loss. This idea is illustrated on the square-loss. Compared to Huber or Catoni's losses, this approach allows to control easily the risk of our estimators by using classical tools from empirical process theory, it also allows to tackle the problem of ``aggressive" outliers.

The closest work is certainly that of \cite{LugosiMendelson2016}, but we believe that our paper contains substantial improvements. We stress the intimate relationship between their estimator and Le Cam general construction and use this parallel to propose a much simpler estimator. Our risk bounds are always better and we extend their results to possibly corrupted data-sets. 

To investigate robustness properties of median-of-means estimators, we partition the dataset into two parts. One is made of outliers data. They are indexed by $\cO\subset[N]$ of cardinality $|\cO|=K_o$. \textbf{On those data, absolutely nothing is assumed }: they may not be independent, have distributions $P_i$ totally different from $P$, with no moment at all, etc.. These are typically data polluting datasets like in the case of declarative data on internet or when something went wrong during the storage, compression or transfer which resulted in complete non sense data. They may also be observations met in biology as in the classical \textit{eQTL (Expression Quantitative Trait Loci and The Phenogen Database)} from \cite{eQTL}. Many other examples of datasets containing outliers could be provided, this includes frauds detection and terrorist activity as examples. Of course, outliers are not flagged in advance and the statistician is given no a priori information on which data is an outlier or not.
The other part of the dataset is made of data on which the MOM estimator rely on to estimate the oracle $f^*$. There should be enough information in those data so that the estimation of $f^*$ is possible, even in the presence of outliers provided they remain in a ``decent proportion''. We therefore call the non-outliers, the \textit{informative data}, those that bring information on $f^*$. We denote by $\cI\subset[N]$ the set indexing these data. We therefore end up with a partition of $[N]$ as $[N] = \cI \cup \cO$ which, again, is not known from the statistician.

The radii of the sets $\cB_{T_N}(f)$ are computed for regularization and $L^2_P$ norms. The regularization norm is chosen in advance by the statistician to  promote sparsity or smoothness. It can be used freely in our procedure, but it doesn't ensure a small $L^2_P$ risk for the estimator. The $L^2_P$-norm is unknown in general since it depends on the distribution of $X$. Furthermore, the classical $L^2_{P_N}$-empirical metric fails to estimate the $L^2_P$ metric without subgaussian properties of the design vector $X$. Fortunately, it can be replaced by a median-of-means metric. To handle simultaneously both regularization and $L^2_P$ norms, we will also slightly extend Le Cam's principle. 
Our first important result shows that the resulting estimator is well localized w.r.t. both regularization and $L^2_P$ norms. 

Median-of-means estimators rely on a data splitting into $K$ blocks and this parameter drives the resulting statistical performances (cf. \cite{MR3576558}). To achieve optimal rates, $K$ should be ultimately chosen using parameters that depend on the oracle $\bayes$ like its sparsity which is not in general available to the statistician. To bypass this problem, the strategy of \cite{MR1147167} is used as in \cite{MR3576558} to select $K$ adaptively and get a fully data-driven procedure. 

There are four important features in our approach. First, all results are proved under weak $L^{2+\epsilon}$ moment assumptions on the noise. This is an almost minimal condition for the problem to make sense. The class $F$ is only assumed to satisfy a weak ``$L_2/L_1$'' comparison. Second, performances of the estimators are not affected by the presence of complete outliers, as long as their number remains comparable to \textit{(number of observations)$\times$(rates of convergence)}.
Third, all results are non-asymptotic and the regression function $x\mapsto\E[Y|X=x]$ is never assumed to belong to the class $F$. In particular, the noise $Y-\bayes(X)$ can be correlated with $X$. Finally, even ``informative data", those that are not ``outliers", are not requested to be i.i.d. $\sim P$, but only to have close first and second moments for all $f\in F-\{f^*\}$. Nevertheless, the estimators are shown to behave as well as the ERM when the data are i.i.d. $\sim P$, $\E[Y|X=\cdot]\in F$, the noise $\zeta=Y-\bayes(X)$ and the class $F$ are Gaussian and the noise is independent from the design.

\textbf{Example: sparse-recovery via MOM LASSO.}
As a proof of concept, theoretical properties are illustrated in the classical example of sparse-recovery in high-dimensional spaces using the $\ell_1$-regularization. 
This example illustrates typical results that follow from our analysis in one of the most classical problem of high dimensional statistics (cf. \cite{MR2807761,MR3307991}). 
The interested reader can check that it also applies to other procedures like Slope (cf. \cite{slope1,slope2}) and trace-norm regularization as well as kernel methods, for instance, by using the results in \cite{LM_reg_comp,LM_reg_comp_2}. 
%

Recall this classical setup.
%
%
Let $X$ denote a random vector in $\R^d$ such that $\E\inr{X, t}^2=\norm{t}_2^2$ for all $t\in\R^d$ ($X$ is isotropic) and let $Y$ be a real-valued random vector. Let $t^*\in\argmin_{t\in\R^d}\E(Y-\inr{X, t})^2$. 
Let $(X_i,Y_i)_{i\in[N]}$ denote independent data corrupted by outliers : no assumption is made on a subset $(X_i,Y_i)_{i\in \cO}$ of the dataset. Let $\cI=[N]\setminus \cO$ denote the indices of \textit{informative data} $(X_i,Y_i)_{i\in \cI}$: for all $i\in \cI$, $(X_i, Y_i)$ are independent with the same distribution $(X, Y)$.
 For the sake of simplicity, we only consider the case of i.i.d. informative data in this example.
 In high-dimensional statistics, $N\leq d$ but $t^*$ has only $s$ ($s < N$) non-zero coordinates. To estimate $t^*$, the $\ell_1$-norm $\norm{\cdot}_1$ is used for penalization to promote zero coordinates. The following result holds.

\begin{Theorem}\label{theo:lasso_classi}[Theorem~1.4 in \cite{LM_reg_comp}] Assume $t^*$ is $s$-sparse, $N\geq c_0 s \log(ed/s)$, $X$ is isotropic and
\begin{enumerate}
	\item[i)] $|\cI|=N$ and $|\cO|=0$ (no outliers in the dataset),
	\item[ii)] $ \zeta=Y-\inr{X, t^*} \in L_{q_0}$ for some $q_0>2$
	\item[iii)] there exists $L>0$ such that for all $t\in\R^d$ and all $p\ge 2$, $\norm{\inr{X,t}}_{L_p}\leq L \sqrt{p}\norm{\inr{X, t}}_{L_2}$ 
	\item[iv)]  there exist $u_0>0$ and $\beta_0>0$ such that for all $t\in\R^d$, 
	\[\bP\left[|\inr{X, t}|\geq u_0\norm{\inr{X, t}}_{L_2}\right]\geq \beta_0\enspace.\]  
\end{enumerate}
The LASSO estimator, defined by
\begin{equation*}
\tilde t \in\argmin_{t\in\R^d}\left(\frac{1}{N}\sum_{i=1}^N \left(Y_i-\inr{X_i, t}\right)^2 +c_1\norm{\zeta}_{L_{q_0}}\sqrt{\frac{\log(ed)}{N}}\norm{t}_1\right)
\end{equation*}satisfies for every $1\leq p\leq 2$, 
\begin{equation*}
\norm{\tilde t - t^*}_p\leq c_4(L, u_0, \kappa_0)\norm{\zeta}_{L_{q_0}}s^{1/p}\sqrt{\frac{\log(ed)}{N}}\enspace,
\end{equation*}
 with probability at least
\begin{equation}\label{eq:proba_lasso_classic}
 1-\frac{c_2 \log^{q_0}N}{N^{q_0/2-1}} - 2 \exp\left(-c_3s \log(ed/s)\right)\enspace.
 \end{equation}\end{Theorem}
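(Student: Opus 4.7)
The plan is to apply the small-ball approach of Mendelson in the spirit of Koltchinskii--Lounici--Tsybakov to the LASSO, with the regularization parameter $\lambda = c_1\|\zeta\|_{L_{q_0}}\sqrt{\log(ed)/N}$ calibrated precisely to dominate the dual $\ell_\infty$-norm of the noise-design interaction $N^{-1}\sum_{i=1}^N \zeta_i X_i$. Setting $v = \tilde t - t^*$, the definition of $\tilde t$ yields the basic inequality
\[
\|v\|_{L_2(P_N)}^2 \leq \frac{2}{N}\sum_{i=1}^N \zeta_i \langle X_i, v\rangle + \lambda(\|t^*\|_1 - \|\tilde t\|_1)\enspace.
\]
On the event $\{\lambda \geq 2\|N^{-1}\sum_i \zeta_i X_i\|_\infty\}$, the standard soft-thresholding computation gives $\|v_{S^c}\|_1 \leq 3\|v_S\|_1$ for $S = \supp(t^*)$; in particular $\|v\|_1 \leq 4\sqrt{s}\,\|v\|_2$, confining $v$ to the cone $\cC_S$.

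Next I would lower-bound $\|v\|_{L_2(P_N)}^2$ uniformly on the sphere $\{v\in\cC_S:\|v\|_2 = \rho\}$. Isotropy of $X$ gives $\|\langle X,v\rangle\|_{L_2} = \|v\|_2$, and (iv) provides the pointwise small-ball inequality $\bP[|\langle X,v\rangle|\geq u_0\|v\|_2]\geq\beta_0$. Mendelson's empirical small-ball argument then reduces the uniform bound to controlling the Rademacher complexity of $\cC_S\cap \rho B_2^d$, which by the $L_p/L_2$ norm equivalence (iii) is at most of order $\rho\sqrt{s\log(ed/s)/N}$. A bounded-differences argument yields
\[
\inf_{v\in\cC_S,\,\|v\|_2=\rho}\|v\|_{L_2(P_N)}^2 \geq c\,\rho^2
\]
with probability at least $1 - 2\exp(-c_3 s\log(ed/s))$, provided $N\gtrsim s\log(ed/s)$.

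The crux of the proof, and the main obstacle, is bounding the multiplier process under the sole assumption $\zeta\in L_{q_0}$ with $q_0>2$, where sub-Gaussian chaining is unavailable. I would proceed by duality:
\[
\left|\frac{1}{N}\sum_{i=1}^N \zeta_i \langle X_i, v\rangle\right| \leq \left\|\frac{1}{N}\sum_{i=1}^N \zeta_i X_i\right\|_\infty \|v\|_1\enspace,
\]
and bound the $\ell_\infty$-norm coordinatewise. A Fuk--Nagaev inequality on each sum $N^{-1}\sum_{i=1}^N \zeta_i X_{i,j}$, combined with (ii) to handle $\zeta$ and with (iii) to control $\log(ed)$ moments of $X_j$, followed by a union bound over $j\in[d]$, delivers
\[
\left\|\frac{1}{N}\sum_{i=1}^N \zeta_i X_i\right\|_\infty \lesssim \|\zeta\|_{L_{q_0}}\sqrt{\frac{\log(ed)}{N}}
\]
with probability at least $1 - c_2\log^{q_0}(N)/N^{q_0/2-1}$. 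The weak-moment tail of $\zeta$ is exactly what produces the polynomial term in the announced probability.

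Combining these three events, the basic inequality becomes $c\|v\|_2^2 \leq 6\lambda\sqrt{s}\,\|v\|_2$, yielding $\|v\|_2 \lesssim \sqrt{s}\lambda \asymp \|\zeta\|_{L_{q_0}}\sqrt{s\log(ed)/N}$. A standard homogeneity/peeling step justifies restricting to a single scale $\rho$. The case $p=1$ follows from $\|v\|_1\leq 4\sqrt{s}\|v\|_2$, and the case $1< p\leq 2$ from the interpolation inequality $\|v\|_p^p \leq \|v\|_1^{2-p}\|v\|_2^{2(p-1)}$, giving $\|v\|_p \lesssim s^{1/p}\sqrt{\log(ed)/N}\cdot\|\zeta\|_{L_{q_0}}$ as stated.
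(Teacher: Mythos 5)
Your overall architecture (basic inequality, cone condition, small-ball lower bound on the empirical quadratic form) is a sensible and in fact more elementary route than the one in \cite{LM_reg_comp}, which never passes through the cone condition but instead shows that the regularized excess empirical risk is positive outside a ball determined by the sparsity equation. The quadratic part of your argument is fine: isotropy plus (iv) gives the pointwise small-ball estimate, the Gaussian/Rademacher width of $\mathcal{C}_S\cap \rho B_2^d$ is of order $\rho\sqrt{s\log(ed/s)}$ under the subgaussian assumption (iii), and Mendelson's argument with bounded differences delivers the restricted lower isomorphy with probability $1-2\exp(-cs\log(ed/s))$. That accounts correctly for the exponential term in \eqref{eq:proba_lasso_classic}.

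The genuine gap is in the step you yourself identify as the crux: the control of $\bigl\|N^{-1}\sum_{i}\zeta_i X_i\bigr\|_\infty$ by a coordinatewise Fuk--Nagaev bound followed by a union bound over $j\in[d]$. Since $\zeta$ has only $q_0$ moments, the heavy-tailed part of the Fuk--Nagaev tail at the level $t=\kappa\norm{\zeta}_{L_{q_0}}\sqrt{N\log(ed)}$ is of order $N^{1-q_0/2}\log^{-q_0/2}(ed)$ \emph{per coordinate}; after the union bound this becomes $d\,N^{1-q_0/2}\log^{-q_0/2}(ed)$, which is not small in the high-dimensional regime $d\geq N$ that the theorem is designed for, and cannot be repaired by enlarging the constant $\kappa$ without inflating $\lambda$ by a factor $d^{1/q_0}$ and destroying the rate. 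The announced probability $1-c_2\log^{q_0}(N)/N^{q_0/2-1}$ is dimension-free precisely because the proof in \cite{LM_reg_comp} does not take a union bound over coordinates: it invokes Mendelson's multiplier-process theorem (quoted in Section~\ref{sec:Complexity} of this paper as the bound $\E\sup_{v\in\rho B_1^d\cap rB_2^d}|\sum_i\eps_i\zeta_i\inr{v,X_i}|\lesssim \sigma\sqrt{N}\,\ell^*(\rho B_1^d\cap rB_2^d)$), in which the polynomial failure probability comes from a \emph{single} event on the decreasing rearrangement of $(|\zeta_i|)_{i\leq N}$, while the dependence on $d$ is absorbed into a subgaussian chaining argument on the design conditionally on that event. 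Replacing your duality-plus-union-bound step by this multiplier-process bound (applied on $\rho B_1^d\cap r B_2^d$ rather than on the cone) closes the gap; as written, the step fails.
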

This paper shows that Theorem~\ref{theo:lasso_classi} holds for a MOM version of  the LASSO estimator under much weaker assumptions, with a better probability estimate than \eqref{eq:proba_lasso_classic}. More precisely, the following theorem is proved. 
\begin{Theorem}\label{theo:mom_lasso}
Assume that  $t^*$ is $s$-sparse,  $N\geq c_0 s \log(ed/s)$, $X$ is isotropic and  
\begin{enumerate}
	\item[i')] $|\cI|\geq N/2$ and $|\cO|\leq c_1 s \log(ed/s)$ (the number of outliers may be proportional to the sparsity times $\log(ed/s)$),
	\item[ii)] $ \zeta=Y-\inr{X, t^*} \in L_{q_0}$ for some $q_0>2$
	\item[iii')] for every $1\leq p\leq C_0 \log(ed)$, $\norm{\inr{X,e_j}}_{L_p}\leq L \sqrt{p}\norm{\inr{X,e_j}}_{L_2}$ where $(e_j)_{j\in[d]}$ is the canonical basis of $\R^d$ and $C_0$ is some absolute constant,
	\item [iv')] there exists $\theta_0$ such that  $\norm{\inr{X, t}}_{L^1}\leq \theta_0\norm{\inr{X, t}}_{L^2}$, for all $t\in \R^d$,
	\item [v)] there exists $\theta_m$ such that ${\rm var}(\zeta\inr{X, t})\leq \param{m}^2 \norm{t}_2^2$, for all $t\in\R^d$.
\end{enumerate}
There exists an estimator $\hat t$, called MOM-LASSO, satisfying  for every $1\leq p\leq 2$, 
\begin{equation*}
\norm{\hat t - t^*}_p\leq c_4(L, \theta_m)\norm{\zeta}_{L_{q_0}}s^{1/p}\sqrt{\frac{1}{N}\log\left(\frac{ed}{s}\right)}\enspace,
\end{equation*}
with probability at least 
\begin{equation}\label{eq:proba_mom_lasso}
1-c_2 \exp(-c_3 s \log(ed/s))\enspace.
\end{equation}
\end{Theorem}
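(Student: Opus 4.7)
The plan is to apply the general Le Cam MOM risk bound developed earlier in the paper to the LASSO setting and then to compute explicitly the two complexity fixed points that control the rate. I construct MOM-LASSO by partitioning $[N]$ into $K$ equal-size disjoint blocks $B_1,\ldots,B_K$, where $K$ is chosen of the order $s\log(ed/s)$ and is selected in a fully data-driven fashion by a Lepski-type scheme as in \cite{MR1147167,MR3576558}. For $t,t'\in\R^d$ and a penalty $\lambda\asymp\norm{\zeta}_{L_{q_0}}\sqrt{\log(ed)/N}$, I form the regularized MOM test
\begin{equation*}
T_K(t,t')=\MOM{K}{(Y-\inr{X,t})^2-(Y-\inr{X,t'})^2}+\lambda(\norm{t}_1-\norm{t'}_1),
\end{equation*}
together with a MOM $L^2_P$-counterpart used to localize in the quadratic geometry (the true $L^2_P$ norm is unknown and must itself be estimated by a MOM metric, as pointed out in the introduction). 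The Le Cam champion $\hat t$ is then defined as a minimizer of the $\ell_1$-plus-MOM-$L^2$ diameter of the preferred set $\cB_{T_K}(t)=\{t'\in\R^d:T_K(t',t)\leq 0\}$.

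Since $|\cO|\leq c_1 s\log(ed/s)$, fixing the constant in $K\asymp s\log(ed/s)$ so that $K\geq 8|\cO|$ ensures that at least $7K/8$ blocks are entirely contained in $\cI$ and therefore carry i.i.d.\ copies of $(X,Y)$; the median in $\MOM{K}{\cdot}$ then filters out any block that intersects $\cO$. The general theorem reduces the risk analysis to estimating two fixed-point radii on the localized set $\rho B_1^d\cap r_Q B_2^d$ centered at $t^*$: a quadratic radius $r_Q$ governed by a MOM small-ball lower bound for $\MOM{K}{\inr{X,t-t^*}^2}$, and a regularization radius $\rho$ governed by a MOM upper bound on the multiplier process $\MOM{K}{\zeta\inr{X,e_j}}$. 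Assumption (iv') combined with a blockwise Paley--Zygmund argument on clean blocks yields $\MOM{K}{\inr{X,u}^2}\gtrsim\norm{u}_2^2$ uniformly over the localization; assumption (v) and Chebyshev give $|P_{B_k}(\zeta\inr{X,e_j})|\lesssim\param{m}\sqrt{K/N}$ on a majority of blocks for each fixed $j$, and the logarithmic moment equivalence (iii') allows a union bound over $j\in[d]$ at the price of only a $\sqrt{\log(ed)}$ factor. Solving the resulting fixed-point equations produces $r_Q^2\asymp s\log(ed/s)/N$ and $\rho\asymp s\sqrt{\log(ed/s)/N}$, which yields the announced $\ell_1$ and $\ell_2$ bounds; the intermediate $\ell_p$ bound for $1<p<2$ follows by the log-convexity interpolation $\norm{u}_p\leq\norm{u}_1^{2/p-1}\norm{u}_2^{2-2/p}$ applied to $u=\hat t-t^*$.

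The probability estimate \eqref{eq:proba_mom_lasso} comes from block concentration: under the weak moments (ii) and (v), Chebyshev bounds the probability that a given clean block ``fails'' (its empirical mean deviates from the target by more than the prescribed scale) by a small constant, say $1/8$, so by Hoeffding applied to the indicators of block failures, the MOM statistic is off target only if a majority of clean blocks fail, an event of probability $\exp(-cK)=\exp(-c\,s\log(ed/s))$. The main obstacle I anticipate is precisely the uniform MOM quadratic lower bound: subgaussian chaining is unavailable under only (iii'), so one must cover the $\ell_1$-$\ell_2$ localization set in the weak geometry supplied by the logarithmic moment equivalence and combine that covering with a block-wise Paley--Zygmund estimate that remains stable under a constant fraction of corrupted blocks. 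Once this uniform small-ball is established, the remainder of the argument is a careful instantiation of the general Le Cam MOM theorem with the explicit fixed-point values $r_Q$ and $\rho$ above.
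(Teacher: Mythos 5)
Your overall architecture matches the paper's: a regularized MOM test, a Le Cam champion localized simultaneously in $\norm{\cdot}_1$ and in a MOM surrogate for $\norm{\cdot}_{L^2_P}$, a Lepski selection of $K$, a Paley--Zygmund small-ball lower bound for the quadratic process, and the $\norm{u}_p\leq\norm{u}_1^{2/p-1}\norm{u}_2^{2-2/p}$ interpolation. However, there are three concrete gaps. First, you never say where the $s$-sparsity of $t^*$ actually enters. In the paper the radius $\rho^*\sim\sigma s\sqrt{\log(ed/s)/N}$ is \emph{not} the solution of an empirical-process fixed point; it is determined by the sparsity equation $\Delta(\rho)\geq 4\rho/5$ (Definition~\ref{def:sparsity_equation} and Lemma~\ref{lem:sparse_equa_LASSO}), i.e.\ by showing that the subdifferential of $\norm{\cdot}_1$ at the sparse point $t^*$ is large enough that $\lambda(\norm{t}_1-\norm{t^*}_1)\gtrsim\lambda\rho$ on the sphere $\norm{t-t^*}_1=\rho$, which is what makes the test $T_K(t^*,t)$ positive when the quadratic term is uninformative. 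Your proposal conflates this radius with the multiplier fixed point $r_M$, so the step "solving the resulting fixed-point equations produces $\rho\asymp s\sqrt{\log(ed/s)/N}$" has no supporting argument. Second, your uniform multiplier control via a union bound over $j\in[d]$ of $\MOM{K}{\zeta\inr{X,e_j}}$ does not work: the median is not linear, so coordinatewise bounds do not control $\sup_{v\in\rho B_1^d\cap rB_2^d}\MOM{K}{\zeta\inr{X,v}}$, and the sets of "good blocks" for different coordinates need not have a common majority. The paper instead defines the fixed point $r_M$ through $\E\sup_{v}|\sum_i\eps_i\zeta_i\inr{v,X_i}|$ over $\rho B_1^d\cap rB_2^d$, bounds it by the Gaussian mean width $\ell^*(\rho B_1^d\cap rB_2^d)$ using Theorem~1.6 of \cite{shahar_gafa_ln} (this is exactly where the $C_0\log(ed)$ moments of (iii') are consumed), and converts this into a "majority of blocks are simultaneously good for all $v$" statement by the symmetrization--contraction--bounded-difference argument of Lemma~\ref{lem:proc_multiplicatif}.

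Third, your penalty $\lambda\asymp\norm{\zeta}_{L_{q_0}}\sqrt{\log(ed)/N}$ is inconsistent with the rate you claim: with that $\lambda$ the standard accounting gives $\rho\asymp\lambda s\asymp\sigma s\sqrt{\log(ed)/N}$ and hence the classical LASSO rate $s\log(ed)/N$. The minimax $\log(ed/s)$ improvement, which the paper explicitly advertises as a feature of MOM-LASSO, comes from tying the penalty to the Lepski-selected block number, $\lambda\sim r^2(\rho_K)/\rho_K\sim\sigma\sqrt{\log(e\sigma^2 d/K)/N}$ as in \eqref{eq:reg_param_lasso}, which at $K\sim s\log(ed/s)$ gives $\lambda\sim\sigma\sqrt{\log(ed/s)/N}$. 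These three points (sparsity equation, empirical-process fixed point for the multiplier term, and the $K$-dependent penalty) are precisely the ingredients the paper assembles in Theorems~\ref{theo:basic-combining-loss-and-reg} and~\ref{theo:main_baby_Lepski} before specializing to $\ell_1$; without them your outline does not yet yield the stated bound.
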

Theoretical properties of MOM LASSO outperform those of LASSO in several ways.
\begin{itemize}
	\item Estimation rates achieved by MOM-LASSO are the actual minimax rates $s \log(ed/s)/N$,  see \cite{BLT16}, while classical LASSO estimators achieve the rate $s \log(ed)/N$. This improvement is possible thanks to the adaptation step in MOM-LASSO. 
	\item  the probability deviation in  \eqref{eq:proba_lasso_classic} is polynomial -- $1/N^{(q_0/2-1)}$ in \eqref{eq:proba_lasso_classic} -- it is exponentially small for MOM LASSO.  Exponential rates for LASSO hold only if $\zeta$ is subgaussian ($\norm{\zeta}_{L_p}\leq C \sqrt{p}\norm{\zeta}_{L_2}$ for all $p\geq2$).
	\item MOM LASSO is insensitive to data corruption by up to $s$ times $\log(ed/s)$ outliers while only one outlier can be responsible of a dramatic breakdown of the performances of LASSO.
	\item All assumptions on $X$ are weaker for MOM LASSO than for LASSO. In particular, condition \textit{v)} holds with $\param{m} = \norm{\zeta}_{L_4}$ if for all $t\in\R^d$, $\norm{\inr{X, t}}_{L_4}\leq \param{0} \norm{\inr{X, t}}_{L_2}$ -- which is a much weaker requirement than condition iii) for LASSO. 
\end{itemize}
%
%
%

From a mathematical point of view, our results are based on a slight extension of the Small Ball Method (SBM) of \cite{MR3431642,Shahar-COLT} to handle non-i.d. data. SBM is also extended to bound both quadratic and multiplier parts of the quadratic loss. Otherwise, all arguments are standard, which makes the approach very attractive and easily reproducible in other frameworks of statistical learning.

The paper is organized as follows. Section~\ref{sec:setting} briefly presents the general setting and our main illustrative example. Section~\ref{Sec:LFT} presents Le Cam's construction of estimators based on tests. We also show why many learning procedures may be obtained by this approach. The construction of estimators and the main assumptions are gathered in Section~\ref{sec:EstimatorsAssumptions}. Our main theorems are stated in Section~\ref{sub:main_result} and proved in Section~\ref{sec:Proofs}.

\paragraph{Notation} For any real number $x$, let $\lfloor x\rfloor$ denote the largest integer smaller than $x$ and let $[x]=\{1,\ldots,\lfloor x\rfloor\}$ if $x\ge 1$. For any finite set $A$, let $|A|$ denote its cardinality. All along the paper, $(\cabs{i})_{i\in \N}$ denote absolute constants which may vary from line to line and $\param{\cdot}$, with various subscripts, denote real valued parameters introduced in the assumptions. Finally, for any set $\mathcal{G}$ for which it makes sense, for any $g\in\mathcal{G}$, $c\ge 0$ and $\mathcal{C}\subset\mathcal{G}$, 
\[
g+c\mathcal{C}=c\cC+g=\{h : \exists g'\in \mathcal{C} \text{ such that }h=g+cg'\}\enspace.
\]
Let also $g+\cG=g+1\cG$. We also denote by $I(g\in\cC)$ the indicator function of the set $\cC$ which equals to $1$ when $g\in\cC$ and $0$ otherwise.

\section{Setting}\label{sec:setting}
Let $\mathcal X$ denote a measurable space and let $(X,Y),(X_i,Y_i)_{i\in[N]}$ denote random variables taking values in $\cX\times \R$, with respective distributions $P, (P_i)_{i\in[N]}$. Given a probability distribution $Q$, let $L^2_Q$ denote the space of all functions $f$ from $\cX$ to $\R$ such that $\norm{f}_{L^2_Q}<\infty$ where $\norm{f}_{L^2_Q} = \big(Q f^2\big)^{1/2}$. Let $F\subset L^2_{P}$ denote a convex class of functions $f:\cX\to\R$. Assume that $PY^2<\infty$ and let, for all $f\in F$,
\[
R(f)=P\big[(Y-f(X))^2\big],\quad \bayes\in\argmin_{f\in F}R(f) \mbox{ and }  \zeta=Y-\bayes(X)\enspace.
\]
Let $\norm{\cdot}$ denote a norm defined onto a linear subspace $E$ of $L^2_P$ containing $F$.

\paragraph*{Example : $\ell_1$-regularization of linear functionals}
For every $t=(t_j)_1^d\in\R^d$ and $1\leq p\leq +\infty$, let 
\begin{equation*}
F = \{\inr{\cdot, t}: t\in\R^d\}\;  \an \; \norm{\inr{\cdot, t}} = \norm{t}_1,\; \text{where}\; \norm{t}_p=\left(\sum_{j=1}^d |t_j|^p\right)^{1/p}\enspace.
\end{equation*}
Let $\bayes=\inr{\cdot, t^*}\in F$, where
\begin{equation*}
t^*\in\argmin_{t\in\R^d}\left\{P\big(Y-\inr{X, t}\big)^2\right\}\enspace.
\end{equation*}
Whenever it's necessary, $(e_1, \ldots, e_d)$ will denote the canonical basis of $\R^d$ and $B_p^d$ (resp. $S_p^{d-1}$) will denote the unit ball (resp. sphere) associated to $\norm{\cdot}_p$. 
To ease readability in this example, we focus on rates of convergence, we do not consider the ``full'' non-i.i.d. setup and assume that $P=P_i$ for all $i\in\cI$. We write $L^q$ for $L^q_P$ to shorten notations. 

\section{Learning from tests}\label{Sec:LFT}
\subsection{General Principle}
This section details the ideas underlying the construction of a MOM estimator using an extension of Le Cam's approach.
\paragraph{Basic idea}
By definition of the oracle $f^*$, one has
\[
\bayes=\argmin_{f\in F}R(f)=\argmin_{f\in F}\sup_{g\in F}\{R(f)-R(g)\},\; \text{where} \; R(f)=P[(Y-f(X)^2]\enspace.
\]
As $T_{\text{id}}(g,f)=R(f)-R(g)$ depends on $P$, we estimate it by test statistics $T(g,f,(X_i,Y_i)_{i\in[N]})\equiv T_N(g,f)$ that is, real random variables such that 
\begin{equation}\label{eq:DefTest}
T_N(f,g)+T_N(g,f)=0\enspace. 
\end{equation}
These statistics are used to \emph{compare} $f$ to $g$, simply by saying that $g$ $T_N$-\emph{beats} $f$ iff $T_N(g,f)\ge 0$. In this paper, the statistics $T_N(g,f)$ are median-of-means estimators of $R(f)-R(g)$ (cf. \eqref{eq:beat_on_Bk} in Section~\ref{Sec:QOMProcesses}).

\paragraph{Le Cam's construction}
Let $(T_N(g,f))_{f,g\in F}$ denote a collection of test statistics and let $d(\cdot, \cdot)$ denote a pseudo-distance on $F$ measuring (or related to) the risk we want to control. Let for all $f\in F$,
\[
\mathcal{B}_{T_N}(f)=\{g\in F : T_N(g,f)\ge 0\}
\]be the set of all functions $g\in F$ that beat $f$. If $f$ is far from $f^*$, then $\mathcal{B}_{T_N}(f)$ is expected to have a large radius w.r.t. $d(\cdot, \cdot)$. We therefore introduce this radius as a criteria to minimize : for all $f\in F$, let $C_{T_N}(f) = \sup_{g\in \cB_{T_N}(f)}d(f,g)$. 

By \eqref{eq:DefTest}, $f\in \mathcal{B}_{T_N}(g)$ or $g\in \mathcal{B}_{T_N}(f)$ (both happen if $T_N(f,g)=0$), hence $d(f,g)\le C_{T_N}(f)\vee C_{T_N}(g)$. In particular, for all $f\in F$, 
\begin{equation}\label{GenRiskBound2}
d(f,\bayes)\le C_{T_N}(f)\vee C_{T_N}(\bayes)\enspace. 
\end{equation}
Eq~\eqref{GenRiskBound2} suggests to define the estimator
\begin{equation}\label{Def:Testimators}
 \hat f_{T_N}\in \argmin_{f\in F}C_{T_N}(f)=\argmin_{f\in F}\sup_{g\in \cB_{T_N}(f)}d(f,g)\enspace.
\end{equation}
This estimator satisfies, from Eq~\eqref{GenRiskBound2},
\begin{equation}\label{GenRiskBound}
d(\hat f_{T_N},\bayes)\le C_{T_N}(\bayes)\enspace.
\end{equation}
Risk bounds for $\hat f_{T_N}$ follow from \eqref{GenRiskBound} and upper bounds on the radii of $\mathcal{B}_{T_N}(\bayes)$. 

\begin{Remark}
More generally, one can compare only the elements of a subset $\mathcal F\subset F$, typically a maximal $\epsilon$-net by introducing for all $f\in \mathcal F $, the set
\begin{equation}\label{def:BGen}
\mathcal{B}_{T_N}(f,\mathcal F)=\{g\in \mathcal F : T_N(g,f)\ge 0\} 
\end{equation} and then by minimizing the diameter of $\mathcal{B}_{T_N}(f,\mathcal F)$ over $\cF$. This usually improves the rates of convergence for constant deviation results when there is a gap in Sudakov's inequality of the localized sets of $F$ (cf. Section~5 in \cite{LM13} for more details). These results are not presented because we are interested in exponentially large deviation results for which our results are optimal. 
\end{Remark}

\paragraph{Dealing with regularization : the link function}
Statistical performances of estimators and the radius of $\mathcal{B}_{T_N}(\bayes)$ can be measured by two norms: the regularization norm $\|\cdot\|$ and $\|.\|_{L^2_P}$. As \eqref{Def:Testimators} allows only for one distance $d$, we propose the following extension of Le Cam approach to handle two metrics. 

To introduce this extension, assume first that $d(f,g)=\|f-g\|_{L^2_{P}}$ can be computed for all $f, g\in F$ (this is the case if the distribution of the design is known). The next paragraph explains how to deal with the more common framework where this distance is unknown. Remark that
\[
C_{T_N}(f)=\sup_{g\in\mathcal B_{T_N}(f)}\|f-g\|=\min\left\{\rho\ge 0 : \sup_{g\in \mathcal B_{T_N}(f)}\|g-f\|\le \rho\right\}\enspace.
\]
The main point to extend Le Cam's approach to simultaneously control two norms is to design a link function $r(\cdot)$. In a nutshell, the values $r(\rho)$ is the $L^2_P$-minimax rate of convergence in a ball of radius $\rho$ for the regularization norm
(cf. \eqref{eq:defr} in Section~\ref{sec:Complexity} for a formal definition). Then one can define
\begin{equation*}
C_{T_N}^{(2)}(f)=\min\left\{\rho\ge 0 : \sup_{g\in \mathcal B_{T_N}(f)}\|g-f\|\le \rho\text{ and } \sup_{g\in \mathcal B_{T_N}(f)}d(f,g)\le r(\rho)\right\}\enspace. 
\end{equation*}
Theorem \ref{theo:basic-combining-loss-and-reg} shows that while a minimizer $\hat f^{(1)}$ of $C_{T_N}$ has only a nice risk for $\norm{\cdot}$, a minimizer $\hat f^{(2)}$ of $C_{T_N}^{(2)}$ has both $\norm{\hat f^{(2)}-\bayes}$ and $d(\hat f^{(2)},\bayes)$ properly controlled.

\paragraph{Dealing with unknown norms : the isometry property}In general, $L_P^2$-distances cannot be directly computed and have to be estimated. To deal with this issue, one considers usually the empirical $L^2_{P_N}$ distance and prove that empirical and actual distances are equivalent outside a $L^2_P$-ball centered in $f^*$ (cf. for instance, remark after Lemma~2.6 in \cite{LM13}). Unfortunately this approach only works under strong concentration property that we want to relax in this paper. 

The unknown $L_P^2$-metric is instead estimated by a median-of-means approach, that is, we use MOM estimators $d_N(f,g)$ of all $d(f,g)$ (cf. Section~\ref{sec:estimators}). The final estimator is therefore defined as a minimizer of 
\begin{equation*}
C_{T_N}^{\prime\prime}(f)=\min\left\{\rho\ge 0 : \sup_{g\in \mathcal B_{T_N}(f)}\|g-f\|\le \rho\text{ and } \sup_{g\in \mathcal B_{T_N}(f)}d_N(f,g)\le r(\rho)\right\}\enspace. 
\end{equation*}

\subsection{Examples}\label{sec:Example}
Le Cam's approach has been used by Birgé to define $T$-estimators (cf. \cite{MR2449129,MR2219712,MR3186748}) and by Baraud, Birgé and Sart to define $\rho$-estimators (cf. \cite{MR3565484,BaraudBirgeSart}).  \cite{MR2834722,MR3224300} also built efficient estimator selection procedures with this approach. It also extends many common procedures in statistical learning theory, as shown by the following examples.

\paragraph{Example 1 : Empirical minimizers} 
Assume $T_N(g,f)=\ell_N(f)-\ell_N(g)$ for some random function $\ell_N:F\to\R$ and denote by $\hat f=\arg\min_{f\in F}\ell_N(f)$ a minimizer of the corresponding criterion (provided that it exists and is unique). 
Then it is easy to check that $\mathcal B_{T_N}(\hat f)=\{\hat f\}$, so its radius is null, while the radius of any other point $f$ is larger than $d(f,\hat f)>0$ (whatever the non-degenerate notion of pseudo-distance used for $d$). 
It follows that $\hat f$ is the estimator \eqref{Def:Testimators}. 
In particular, any possibly penalized empirical risk minimizer 
\[
\hat f=\arg\min_{f\in F}\{P_N\ell_f+{\rm reg}(f)\}
\] 
is obtained by Le Cam's construction with the tests 
\[T_N(g,f)=P_N(\ell_f-\ell_g)+{\rm reg}(f)-{\rm reg}(g)\enspace.\]
These examples encompass classical empirical risk minimizers of \cite{MR1641250} but also their robust versions from \cite{MR0161415,MR2906886}.

\paragraph{Example 2 : median-of-means estimators}
Another, perhaps less obvious example is the median-of-means estimator \cite{MR1688610, MR855970, MR702836} of the expectation $PZ$ of a real valued random variable $Z$. Let $Z_1,\ldots,Z_N$ denote a sample and let $B_1,\ldots,B_K$ denote a partition of $[N]$ into bins of equal size $N/K$. The estimator $\text{MOM}_K(Z)$ is the (empirical) median of the vector of empirical means $\left( P_{B_k}Z=|B_k|^{-1}\sum_{i\in B_k}Z_i\right)_{k\in[K]}$. Recall that 
\[
PZ=\argmin_{m\in\R}P(Z-m)^2=\argmin_{m\in\R}\max_{m'\in\R}P[(Z-m)^2-(Z-m')^2]\enspace.
\]
Define the MOM test statistic to compare any $m,m'\in \R$ by
\begin{align*}
T_N(m,m')&=\text{MOM}_K[(Z-m')^2-(Z-m)^2]\enspace.
\end{align*}
Basic properties of the median (recalled in Eq~\eqref{prop:Cone} and \eqref{prop:Opposes} of Section~\ref{Sec:QOMProcesses}) yield
\begin{align*}
 T_N(m,m')&=(m')^2-m^2+\text{MOM}_K[-2Z(m'-m)]\\
&=(m')^2-2m'\text{MOM}_K(Z)-[m^2-2m\text{MOM}_K(Z)]\\
&=(m'-\text{MOM}_K(Z))^2-(m-\text{MOM}_K(Z))^2\enspace. 
\end{align*}
Defining $\ell_N(m)=(m-\text{MOM}_K(Z))^2$, one has
\[
T_N(m,m')=\ell_N(m')-\ell_N(m)\enspace.
\]
As in the previous example, Le Cam's estimator based on $T_N$ is therefore the unique minimizer of $\ell_N$, that is $\text{MOM}_K(Z)$.

\paragraph{Example 3 : ``Champions" of a Tournament}\label{Ex:LugMen} \cite{LugosiMendelson2016} introduced median-of-means tournaments. More precisely, they used median-of-means tests to compare elements in $F$. These tests cannot be separated $T_N(f,g)\neq\ell_N(g)-\ell_N(f)$ in general. \cite{LugosiMendelson2016} assume that an upper bound $r^*$ on the radius $C_{T_N}(\bayes)$ of $\mathcal B_{T_N}(\bayes)$  (that holds with exponentially large probability) is known from the statistician and call ``champion" any element $\hat f$ of $F$ such that $C_{T_N}(\hat f)\le r^*$. 
It is clear that, by definition the radius $C_{T_N}(\hat f_{T_N})$ of $\hat f_{T_N}$ is smaller than $C_{T_N}(\bayes)$ and therefore smaller than $r^*$. This means that $\hat f_{T_N}$ is a ``champion" for this terminology. The main advantage of Le Cam's approach is that $r^*$ (which usually depends on some attribute of the oracle like the sparsity) is not required to \emph{build} the estimator $\hat f_{T_N}$.

\section{Construction of the regularized MOM estimators}\label{sec:EstimatorsAssumptions}
\subsection{Quantile of means processes and median-of-means tests}\label{Sec:QOMProcesses}
This section presents median-of-means (MOM) tests used in this work. Designing a family of tests $(T_N(g, f): f, g\in F)$ is one of the most important building blocks in Le Cam's approach together with the right choice of the metric measuring the diameters $\cB_{T_N}(f)$ for $f\in F$.

Start with a few notations. For all $\alpha\in[0,1]$, $\ell\ge 1$ and $z\in\R^\ell$, the set of $\alpha$-quantiles of $z$ is denoted by
\[
\mathcal Q_{\alpha}(z)=\left\{x\in \R : \frac1\ell\sum_{k=1}^\ell I(z_i\le x)\ge \alpha\quad\text{and}\quad\frac1\ell\sum_{k=1}^\ell I(z_i\ge x)\ge 1-\alpha\right\}\enspace.
\]
For a non-empty subset $B\subset [N]$ and a function $f:\cX\times\R\to\R$, let 
\begin{equation*}
P_Bf=\frac{1}{|B|}\sum_{i\in B}f(X_i,Y_i) \mbox{ and } \overline{P}_Bf=\frac{1}{|B|}\sum_{i\in B} P_i f\enspace.
\end{equation*} Let $K\in[N]$ and let $(B_1,\ldots,B_K)$ denote an equipartition of $[N]$ into bins of size $|B_k|=N/K$. When $K$ does not divide $N$, at most $K-1$ data can be removed from the dataset. For any real number $\alpha\in [0,1]$ and any function $f:\cX\times\R\to\R$, the set of $\alpha$-quantiles of empirical means is denoted by
\[
\mathcal Q_{\alpha,K}(f)=\mathcal Q_{\alpha}\left((P_{B_k}f)_{k\in[K]}\right)\enspace.
\]
With a slight abuse of notations, we shall repeatedly denote by $Q_{\alpha,K}(f)$ any element in $\mathcal Q_{\alpha,K}(f)$ and write $Q_{\alpha,K}(f)=u$ if $u\in \mathcal Q_{\alpha,K}(f)$, $Q_{\alpha,K}(f)\ge u$ if $\sup\mathcal Q_{\alpha,K}(f)\ge u$, $Q_{\alpha,K}(f)\le u$ if $\inf\mathcal Q_{\alpha,K}(f)\le u$, and $Q_{\alpha,K}(f)+Q_{\alpha',K}(f')$ any element in the Minkowski sum $\mathcal Q_{\alpha,K}(f)+\mathcal Q_{\alpha',K}(f')$. Let also $\text{MOM}_K(f)=Q_{1/2,K}(f)$ denote an empirical median of the empirical means on the blocks $B_k$. Empirical quantiles satisfy for any $ c\ge 0$, $f,f':\cX\times\R\to\R$ and $\alpha\in[0,1]$,
\begin{gather}
\mathcal Q_{\alpha,K}(c f)=c \mathcal Q_{\alpha,K}(f)\enspace,\label{prop:Cone}\\
\mathcal Q_{\alpha,K}(-f)=-\mathcal Q_{1-\alpha,K}(f)\enspace,\label{prop:Opposes}\\
\sup \cro{{\mathcal Q}_{1/4,K}(f)+{\mathcal Q}_{1/4,K}(f')}\le \inf \mathcal Q_{1/2,K}(f+f')\enspace,\\
 \sup \mathcal Q_{1/2,K}(f+f')\le \inf \cro{{\mathcal Q}_{3/4,K}(f)+{\mathcal Q}_{3/4,K}(f')}\enspace.
\label{prop:Sum}
 \end{gather}
With some abuse of notations, we shall write these properties respectively
\begin{align*}
& Q_{\alpha,K}(c f)=c Q_{\alpha,K}(f),\qquad Q_{\alpha,K}(-f)=-Q_{1-\alpha,K}(f)\enspace,\\
 Q_{1/4,K}(f)&+Q_{1/4,K}(f')\le \MOM{K}{f+f'}\le Q_{3/4,K}(f)+Q_{3/4,K}(f')\enspace.
\end{align*}
A \emph{regularization} parameter $\lambda>0$ is introduced to balance between data adequacy and regularization. 
The (quadratic) loss and regularized (quadratic) loss are respectively defined on $F\times\cX\times\R$ as the real valued functions such that  
 \begin{equation*}
 \ell_f(x,y) = (y-f(x))^2, \quad \ell^\lambda_f = \ell_f+\lambda \norm{f} ,\qquad \forall (f,x,y)\in F\times\cX\times\R\enspace.
 \end{equation*}
To compare/test functions $f$ and $g$ in $F$, median-of-means tests between $f$ and $g$ are now defined by
\begin{equation}\label{eq:beat_on_Bk}
T_{K,\lambda}(g,f)=\MOM{K}{\ell^\lambda_f - \ell^\lambda_g}=\MOM{K}{\ell_f-\ell_g}+\lambda(\|f\|-\|g\|)\enspace.
\end{equation}
From \eqref{prop:Opposes}, $T_{K,\lambda}$ satisfies \eqref{eq:DefTest} and is a tests statistic in the sense of Section \ref{Sec:LFT}. 
   
\subsection{Main assumptions}
Recall that $[N]=\cO\cup\cI$ and that $(X_i,Y_i)_{i\in \cO}$ is a set of outliers on which we make no assumption so these may be aggressive in any sense one can imagine. The remaining informative data $(X_i,Y_i)_{i\in \cI}$ need to bring enough information onto $f^*$. We therefore need some assumption on the sub-dataset $(X_i,Y_i)_{i\in \cI}$ and, in particular, some connexion between the distributions $P_i$ for $i\in\cI$ and $P$. These assumptions are pretty weaksince we only assume essentially that the $L^2_P, L^2_{P_i}$ and $L^1_{P_i}$ geometries are comparable in the following sense. 
%

\begin{Assumption}\label{ass:Mom2F}
There exists $\param{r}\ge 1$ such that, for all $i\in\cI$ and $f\in F$, 
\[
\norm{f-\bayes}_{L^2_{P_i}}\le \param{r}\norm{f-\bayes}_{L^2_P}\enspace.
\] 
\end{Assumption}
Of course, Assumption~\ref{ass:Mom2F} holds in the i.i.d. framework, with $\param{r}=1$ and $\cI=[N]$.
The second assumption bounds the correlation between the noise function $\zeta:(y, x)\in\R\times \cX\to y-\bayes(x)$ and the design on the shifted class $F-\bayes$ in $L^2_Q$ for all $Q\in \{P,(P_i)_{i\in \cI}\}$.

\begin{Assumption}
\label{ass:margin}
There exists $\param{m}>0$ such that, for all $Q\in \{P,(P_i)_{i\in \cI}\}$ and $f\in F$,
\[
 \text{var}_{Q}(\zeta(f-\bayes))=Q\left[\zeta^2(f-\bayes)^2-[Q(\zeta(f-\bayes))]^2\right]\le \param{m}^2\norm{f-\bayes}^2_{L^2_P}\enspace.
\]
\end{Assumption}
Let us give some examples where Assumption~\ref{ass:margin} holds. If the noise random variable $\zeta(Y, X)$ (resp. $\zeta(Y_i, X_i)$ for $i\in\cI$) has a variance conditionally to $X$ (resp. $X_i$ for $i\in\cI$) that is uniformly bounded then Assumption~\ref{ass:margin} holds. This is, for example, the case, when $\zeta(Y,X)$ (resp. $\zeta(Y_i, X_i)$ for $i\in\cI$) is independent of $X$ (resp. $X_i$ for $i\in\cI$) and has finite $L^2$-moment  with $\param{m}=\max_{Q\in P,\{P_i\}_{i\in \cI}}\norm{\zeta}_{L^2_Q}$. It also holds without independence under higher moment conditions.
For example, assume $\sigma=\max_{Q\in P,\{P_i\}_{i\in \cI}}\norm{\zeta}_{L^4_Q}<\infty$ and, for every $f\in F$, $\norm{f-\bayes}_{L^4_Q}\leq \theta_1 \norm{f-\bayes}_{L^2_P}$ then by Cauchy-Schwarz inequality,
$\sqrt{{\rm var}_Q(\zeta(f-\bayes))} \leq \norm{\zeta(f-\bayes)}_{L^2_Q}\leq \norm{\zeta}_{L^4_Q}\norm{f-\bayes}_{L^4_Q}\leq \theta_1 \sigma\norm{f-\bayes}_{L^2_P}$ and so Assumption~\ref{ass:margin} holds for $\param{m} = \theta_1 \sigma$. 

\begin{Assumption}\label{ass:small-ball}
There exists $\theta_0\ge 1$ such that for all $f\in F$ and all $i\in \cI$
\begin{equation*}
\norm{f-f^*}_{L^2_P}\le \theta_0\norm{f-f^*}_{L^1_{P_i}}\enspace.
\end{equation*}
\end{Assumption}
By Cauchy-Schwarz inequality, $\norm{f-f^*}_{L^1_{P_i}}\leq \norm{f-f^*}_{L^2_{P_i}}$ for all $f\in F$ and $i\in\cI$. Therefore, Assumptions~\ref{ass:Mom2F} and~\ref{ass:small-ball} together imply that all norms $L^2_P, L_{P_i}^2, L_{P_i}^1, i\in\cI$ are equivalent over $F-f^*$. Note also that Assumption~\ref{ass:small-ball} is related to the small ball property (cf. \cite{MR3431642,Shahar-COLT}) as shown by Proposition~\ref{prop:SBP=L2/L1}  bellow. The small ball property has been recently used in Learning theory and signal processing. We refer to \cite{MR3431642,LM_compressed,shahar_general_loss,MR3364699,Shahar-ACM,RV_small_ball} for examples of distributions satisfying this assumption.

\begin{Proposition}\label{prop:SBP=L2/L1} Let $Z$ be a real-valued random variable. 
\begin{enumerate}
	\item If there exist $\kappa_0$ and $u_0$ such that $\bP(|Z|\geq \kappa_0\norm{Z}_2)\ge u_0$ then $\norm{Z}_2\le (u_0\kappa_0)^{-1}\norm{Z}_1$.
	\item If there exists $\param{0}$ such that $\norm{Z}_2\le \param{0}\norm{Z}_1$, then for any $\kappa_0<\param{0}^{-1}$, $\bP(|Z|\geq \kappa_0\norm{Z}_2)\ge u_0$ where $u_0=(\param{0}^{-1}-\kappa_0)^2$.
\end{enumerate} 
\end{Proposition}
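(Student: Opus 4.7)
For Part 1, the plan is a one-line truncation argument. Let $A=\{|Z|\ge \kappa_0\|Z\|_2\}$ and write
\[
\|Z\|_1=\E[|Z|]\ge \E[|Z|\mathbbm{1}_A]\ge \kappa_0\|Z\|_2\,\bP(A)\ge \kappa_0 u_0\|Z\|_2,
\]
which rearranges to the claim $\|Z\|_2\le (u_0\kappa_0)^{-1}\|Z\|_1$. Nothing more is needed here.

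For Part 2, the idea is a Paley--Zygmund-style splitting combined with Cauchy--Schwarz. With the same notation $A=\{|Z|\ge \kappa_0\|Z\|_2\}$, I would split
\[
\|Z\|_1=\E[|Z|\mathbbm{1}_A]+\E[|Z|\mathbbm{1}_{A^c}]\le \E[|Z|\mathbbm{1}_A]+\kappa_0\|Z\|_2,
\]
then use the hypothesis $\|Z\|_1\ge \theta_0^{-1}\|Z\|_2$ to deduce $\E[|Z|\mathbbm{1}_A]\ge (\theta_0^{-1}-\kappa_0)\|Z\|_2$ (this is where $\kappa_0<\theta_0^{-1}$ is used to keep the right-hand side positive). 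Cauchy--Schwarz then yields
\[
(\theta_0^{-1}-\kappa_0)\|Z\|_2\le \E[|Z|\mathbbm{1}_A]\le \|Z\|_2\sqrt{\bP(A)},
\]
and squaring gives $\bP(A)\ge (\theta_0^{-1}-\kappa_0)^2=u_0$, as required.

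Both halves are elementary and there is no genuine obstacle; the only conceptual point worth highlighting is that the two implications are not exact inverses (the constants degrade), which is why the statement is split into two separate inequalities rather than an equivalence. The condition $\kappa_0<\theta_0^{-1}$ in Part 2 is also sharp in spirit: if $\kappa_0\ge \theta_0^{-1}$ the left-hand side of the splitting bound becomes non-positive and the argument gives no information, which is consistent with the fact that $\|Z\|_2\le \theta_0\|Z\|_1$ cannot force $Z$ to be large at the level $\kappa_0\|Z\|_2$ once $\kappa_0$ is too close to $\theta_0^{-1}$.
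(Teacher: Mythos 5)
Your proposal is correct and follows essentially the same route as the paper: Part~1 is the identical truncation bound, and Part~2 is the same Paley--Zygmund splitting into $A$ and $A^c$ with Cauchy--Schwarz applied to the $A$-term (the paper merely writes the inequality as $\norm{Z}_2\le\param{0}\norm{Z}_2(\kappa_0+\sqrt{p})$ before dividing, whereas you isolate $\E[|Z|\dsone_A]$ first). The only cosmetic omission is the remark that one may assume $\norm{Z}_2\neq 0$ before dividing, which the paper states explicitly.
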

\begin{proof} If $\bP(|Z|\geq \kappa_0\norm{Z}_2)\ge u_0$ then 
 \begin{equation*}
  \norm{Z}_{1}\ge \int_{|z|\ge \kappa_0\norm{Z}_{2}}|z|P_Z(dz)\ge u_0\kappa_0\norm{Z}_{2}\enspace,
 \end{equation*}where $P_Z$ denotes the distribution of $Z$. Conversely, if $\norm{Z}_2\leq \param{0} \norm{Z}_1$, the Paley-Zigmund's argument \cite[Proposition~3.3.1]{MR1666908} shows that, if $p=\bP\left(|Z| \geq \kappa_0 \norm{Z}_{2}\right)$,
\begin{align*}
\norm{Z}_{2}&\le \param{0}\norm{Z}_{1} =  \param{0} \pa{\E[|Z|I(|Z| \leq \kappa_0 \norm{Z}_{2})] + \E[|Z|I(|Z| \geq \kappa_0 \norm{Z}_{2})]}\\
&\le \param{0}\norm{Z}_{2}\pa{\kappa_0 + \sqrt{p}}\enspace.
\end{align*}
As one can assume that $\norm{Z}_2\ne 0$, $p\geq (\param{0}^{-1}-\kappa_0)^2$.
 \end{proof}

\subsection{Complexity parameters and the link function}\label{sec:Complexity}
This section defines the link function $r(\cdot)$ making the connections between norms that will be required in the extension of Le Cam's approach to a simultaneous control of two norms (one of the two being unknown). For any $\rho\ge 0$ and any $f\in E$, let
\begin{gather*}
 B(f,\rho) = \{g\in E : \norm{f-g}\leq \rho\},\qquad S(f,\rho) = \{g\in E : \norm{g-f} = \rho\}
\enspace.
\end{gather*}

%

\begin{Definition}\label{def:the-three-paraemters}
Let $(\eps_i)_{i\in \cI}$ be independent Rademacher random variables, independent from $(X_i,Y_i)_{i\in\cI}$ and let $\cJ=\{J\subset \cI, |J|\ge |\cI|/2\}$. For any $\gamma_Q,\gamma_M>0$ and $\rho>0$ let $F_{f^\star,\rho,r}=\{f \in F \cap B(f^\star,\rho) : \norm{f-f^\star}_{L^2_P} \leq r\}$,
\begin{align*}
 \fQ^{\gamma_Q}_{f^\star,\rho}&=\left\{r>0: \forall J\in \cJ,\;\E \sup_{f \in F_{f^\star,\rho,r}} \left|\sum_{i\in J} \eps_i (f-f^\star)(X_i)\right| \leq \gamma_Q |J| r \right\}\enspace,\\
 \fM^{\gamma_M}_{f^\star,\rho}&=\left\{r>0: \forall J\in \cJ,\;\E \sup_{f \in F_{f^\star,\rho,r}} \left|\sum_{i\in J} \eps_{i} (Y_i-f^\star(X_i)) (f-f^\star)(X_i)\right| \leq \gamma_M |J| r^2 \right\}
\end{align*}and the two fixed point functions
\begin{gather*}
r_Q(\rho,\gamma_Q)=
\sup_{f^\star\in F}\{\inf \fQ^{\gamma_Q}_{f^\star,\rho}\},\qquad r_M(\rho,\gamma_M) =
\sup_{f^\star \in F}\{\inf \fM^{\gamma_M}_{f^\star,\rho}\}\enspace.
\end{gather*}The \textbf{link function} is any continuous and non-decreasing function $r:\R_+\to \R_+$ such that for all $\rho>0$
\begin{equation}\label{eq:defr}
r(\rho) = r(\rho, \gamma_Q, \gamma_M)\geq \max(r_Q(\rho, \gamma_Q), r_M(\rho, \gamma_M)).
\end{equation}
\end{Definition}Note that if the function $\rho\to \max(r_Q(\rho, \gamma_Q), r_M(\rho, \gamma_M))$ is itself continuous and non-decreasing then it can be taken equal to $r(\cdot)$. In the next paragraph, we provide an explicit computation of the functions $r_Q(\cdot)$, $r_M(\cdot)$ and $r(\cdot)$ in the ``LASSO case''.

\paragraph*{Complexity parameters for the $\ell_1$-regularization}
One can derive $r_Q(\cdot)$ and $r_M(\cdot)$ from Gaussian mean widths defined
%
for any $V\subset\R^d$, by
\begin{equation}\label{eq:Gauss_mean_width}
\ell^*(V) = \E\cro{ \sup_{(v_j)\in V} \sum_{j=1}^d g_j v_j},\quad \text{where }\quad (g_1,\ldots,g_d)\sim\mathcal N_d(0,I_d)\enspace.
\end{equation}
%
The dual norm of the $\ell_1^d$-norm is $1$-unconditional  with respect to the canonical basis of $\R^d$ \cite[Definition~1.4]{shahar_gafa_ln}. Therefore, \cite[Theorem~1.6]{shahar_gafa_ln} applies under the following assumption.

\begin{Assumption}\label{ass:shahar_theo16}
There exist constants $q_0>2$, $C_0$ and $L$ such that $\zeta\in L^{q_0}$, $X$ is isotropic ($\E \inr{X, t}^2 = \norm{t}_2^2$ for every $t\in\R^d$) and its coordinates have $C_0 \log d$ subgaussian moments: for every $1\leq j\leq d$ and every $1\leq p\leq C_0 \log d$, $\norm{\inr{X,e_j}}_{L^p}\le L\sqrt{p}\norm{\inr{X,e_j}}_{L^2}$.
\end{Assumption}
%

\noindent
Under Assumption~\ref{ass:shahar_theo16}, if $\sigma=\norm{\zeta}_{L^{q_0}}$, \cite[Theorem~1.6]{shahar_gafa_ln} shows that, for every $\rho>0$, 
\begin{gather*}
\E \sup_{v\in \rho B_1^d \cap r B_2^d} \left|\sum_{i\in[N]}\eps_i \inr{v, X_i}\right|\leq c_2\sqrt{N}\ell^*(\rho B_1^d \cap r B_2^d)\enspace,\\
\E \sup_{v\in \rho B_1^d \cap r B_2^d} \left|\sum_{i\in[N]} \eps_i \zeta_i \inr{v, X_i}\right|\leq c_2\sigma\sqrt{N}\ell^*(\rho B_1^d \cap r B_2^d)\enspace.
\end{gather*}
Local Gaussian mean widths $\ell^*(\rho B_1^d \cap r B_2^d)$ are bounded from above in \cite[ Lemma~5.3]{LM_reg_comp} and computations of $r_M$ and $r_Q$ follow 
\begin{align*}
r_M^2(\rho) &\lesssim_{L,q_0, \gamma_M} 
\begin{cases}
\sigma^2\frac{ d}{N} & \mbox{ if } \rho^2 N \geq \sigma^2 d^2\\
 \rho\sigma\sqrt{\frac{1}{N}\log\Big(\frac{e\sigma d}{\rho\sqrt{N}}\Big)} & \mbox{ otherwise}
\end{cases}
\enspace,\\
\notag r_Q^2(\rho) &
\begin{cases}
 = 0 & \mbox{ if }  N \gtrsim_{L, \gamma_Q}  d \\
 \lesssim_{L, \gamma_Q} \frac{\rho^2}{N}\log\Big(\frac{c(L, \gamma_Q)d}{N}\Big) & \mbox{ otherwise}
\end{cases}
\enspace.
\end{align*}
Therefore, a link function is explicitly given by 
\begin{equation}\label{eq:r_function_LASSO}
r^2(\rho) \sim_{L,q_0, \gamma_Q, \gamma_M} 
\begin{cases}
\max\left(\rho \sigma\sqrt{\frac{1}{N}\log\Big(\frac{e \sigma d}{\rho\sqrt{N}}\Big)}, \frac{\sigma^2 d}{N}\right) & \mbox{ if } N \gtrsim_L d\\
\max\left(\rho \sigma \sqrt{\frac{1}{N}\log\Big(\frac{e \sigma d}{\rho\sqrt{N}}\Big)},\frac{\rho^2}{N}\log\Big(\frac{d}{N}\Big)\right)  & \mbox{ otherwise} 
\end{cases}
\enspace.
\end{equation}

\subsection{The estimators}
\label{sec:estimators}
Let $(T_{K,\lambda}(g, f))_{f, g\in F}$ denote the family of tests defined in \eqref{eq:beat_on_Bk}.
For every function $f\in F$, let $\mathcal B_{K,\lambda}(f)=\{g\in F : T_{K,\lambda}(g,f)\ge 0\}$ denote the set of all functions $g\in F$ that beats $f$. As explained in Section~\ref{Sec:LFT}, these sets will be measured by two metrics.
%
First, let
\begin{gather*}
R_{K,\lambda}^{\text{reg}}(f)=\sup_{g\in \mathcal B_{K,\lambda}(f)}\left\{\|g-f\|\right\} \mbox{ and } \hat f_{K,\lambda}^{(1)}\in \arg\min_{f\in F} R_{K,\lambda}^{reg}(f)\enspace.
\end{gather*}
Next, let
\begin{gather*}
 R_{K,\lambda}^{(2)}(f)=\sup_{g\in \mathcal B_{K,\lambda}(f)}\left\{\MOM{K}{|g-f|}\right\}.
\end{gather*}
%
Lemma~\ref{lem:Isometry} below proves that, with large probability, $\MOM{K}{|f-g|}$ and $\norm{f-g}_{L^2_P}$ are isomorphic distances. The second criterion is then given by 
\[
C^{(2)}_{K,\lambda}(f)=\inf\left\{\rho\ge 0 : R_{K,\lambda}^{\text{reg}}(f)\le \rho \text{ and }R_{K,\lambda}^{(2)}(f)\le 85\param{r} r(\rho)\right\}\enspace,
\]
where $r(\cdot)$ is a link function as defined in Definition~\ref{def:the-three-paraemters}. That is a continuous and non-decreasing function such that for all $\rho>0$, $r(\rho)\geq  \max(r_M(\rho,\gamma_M), r_Q(\rho,\gamma_Q))$ where the choice of $\gamma_Q$ and $\gamma_M$ is given in Theorem~\ref{theo:basic-combining-loss-and-reg} below.  
The associated estimator is then given by
\[
\hat f_{K,\lambda}^{(2)}\in \argmin_{f\in F}C^{(2)}_{K,\lambda}(f)\enspace.
\]

\subsection{The sparsity equation} 
\label{sub:the_sparsity_equation}
By \eqref{GenRiskBound}, estimation rates for $\hat f_{K,\lambda}^{(2)}$ will be derived from upper bounds on $C_{K,\lambda}^{(2)}(\bayes)$.
To get these, our strategy is to show that $T_{K,\lambda}(\bayes,f)> 0$ for all $f$ such that $\|f-\bayes\|$ or $\|f-\bayes\|_{L^2_P}$ is large. 

Recall that the quadratic / multiplier decomposition of the excess quadratic risk:
\begin{equation}\label{eq:SE1}
T_{K,\lambda}(\bayes,f)=\text{MOM}_K[(f-\bayes)^2-2\zeta(f-\bayes)]+\lambda(\|f\|-\|\bayes\|)\enspace. 
\end{equation}
Let $f\in F$ and $\rho=\norm{f-\bayes}$. When $\rho$ is large and $\norm{f-f^*}_{L^2_P}$ is small, $T_{K,\lambda}(\bayes,f)> 0$ thanks to the regularization term $\lambda(\|f\|-\|\bayes\|)$ in \eqref{eq:SE1} because the quadratic term $(f-\bayes)^2$ is likely to be small.
 We will therefore derive a  lower bound on the regularization term when the subdifferential of $\norm{\cdot}$ is ``large'' in the following sense.

First, we recall that the subdifferential of $\norm{\cdot}$ in $f\in F$ is the set
\begin{equation*}
(\partial\norm{\cdot})_f = \{z^*\in E^*: \norm{f+h}\geq \norm{f}+z^*(h) \mbox{ for every } h\in E  \}\enspace,
\end{equation*} 
where $(E^*, \norm{\cdot}^*)$ is the dual normed space of $(E,\norm{\cdot})$ (and $E$ is the linear space containing $F$ onto which $\norm{\cdot}$ is defined).
For all $\rho>0$, let $H_\rho$ denote the set
\[
H_{\rho} = \{f \in F : \norm{f-\bayes}=\rho, \; \|f-\bayes\|_{L^2_P} \leq r(\rho)\}
\]where $r(\cdot)$ is the \textit{link function} from Definition~\ref{def:the-three-paraemters}. Let $\Gamma_{\bayes}(\rho)$ denote the union of all subdifferentials of $\norm{\cdot}$ at functions ``close" to $\bayes$ 
\[
\Gamma_{\bayes}(\rho)=\bigcup_{f\in B(\bayes, \rho/20)}(\partial \norm{\cdot})_f\enspace. 
\]

Intuitively, every norm is associated with a notion of ``sparsity'' if one agrees to say that a non-zero function $f^{**}$ is \textit{sparse} w.r.t. the norm $\norm{\cdot}$ when the subdifferential of this norm at $f^{**}$ is a ``large subset'' of the dual sphere (i.e. the sphere of $(E^*, \norm{\cdot}^*)$). Sparse functions $f^{**}$ are useful in our context because a large lower bound on $\norm{f}-\norm{f^{**}}$ (and so for $\norm{f}-\norm{f^{**}}$ when $\norm{f^{**}-f^*}$ is small enough) can be derived when the vector $f-f^{**}$ is in the right direction. This intuition are formalized in the sparsity equation. More precisely, let
\begin{equation*}
\forall \rho>0,\qquad \Delta(\rho) = \inf_{f \in H_{\rho}} \sup_{z^* \in \Gamma_{\bayes}(\rho)} z^*(f-\bayes)\enspace.
\end{equation*}
$\Delta(\rho)$ is a uniform lower bound on $\|f\|-\|f^{**}\|$ if $f^{**}\in B(\bayes, \rho/20)$. Thus, $\|f\|-\|\bayes\|\gtrsim \rho$, if $\sup_{f^{**}\in \Gamma_{\bayes}(\rho)}(\|f\|-\|f^{**}\|)\gtrsim \rho$ or if the following \emph{sparsity equation} of \cite{LM_reg_comp} holds.

\begin{Definition}\label{def:sparsity_equation}
A radius $\rho>0$ satisfies the \textbf{sparsity equation} if $\Delta(\rho) \geq 4\rho/5$.
\end{Definition}

\noindent
If $\rho^*$ satisfies the sparsity equation, so do all $ \rho\geq \rho^*$. Therefore, one can define
\begin{equation}\label{eq:SE}
\rho^* = \inf\left(\rho>0: \Delta(\rho) \geq \frac{4\rho}{5}\right).
\end{equation}

\paragraph*{The sparsity equation in $\ell_1^d$-regularization}
The equation has been solved in this example in \cite[Lemma~4.2]{LM_reg_comp}, 
recall this result.

\begin{Lemma} \label{lem:sparse_equa_LASSO}
If there exists $v\in\R^d$ such that $v \in t^*+(\rho/20)B_1^d$ and $|{\rm supp}(v)| \leq c\rho^2/r^2(\rho)$ then 
\begin{equation*}
\Delta(\rho)=\inf_{h \in \rho S_1^{d-1}\cap r(\rho)B_2^{d}} \sup_{g \in \Gamma_{t^*}(\rho)} \inr{h, g-t^*}\geq \frac{4\rho}{5}\enspace.
\end{equation*}
where $S_{1}^{d-1}$ is the unit sphere of the $\ell_1^d$-norm and $B_2^{d}$ is the unit Euclidean ball in $\R^d$.
\end{Lemma}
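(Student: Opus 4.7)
The plan is to exhibit, for each $h$ in the slice $\rho S_1^{d-1}\cap r(\rho)B_2^{d}$, a single $g\in\Gamma_{t^*}(\rho)$ achieving $\inr{h,g}\geq 4\rho/5$; this immediately yields the infimum--supremum lower bound. The natural candidate is a subgradient of $\|\cdot\|_1$ at the sparse approximant $v$ from the hypothesis: since $\|v-t^*\|_1\leq \rho/20$, i.e.\ $v\in B(t^*,\rho/20)$, every element of $(\partial\|\cdot\|_1)_v$ lies in $\Gamma_{t^*}(\rho)$ by definition.

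Let $S=\mathrm{supp}(v)$ and construct $g\in(\partial\|\cdot\|_1)_v$ coordinate-wise by setting $g_j=\mathrm{sign}(v_j)$ for $j\in S$ and $g_j=\mathrm{sign}(h_j)$ for $j\notin S$. This is a valid subgradient of the $\ell_1^d$-norm at $v$: on $S$ the coordinates are forced to $\mathrm{sign}(v_j)$, while on $S^c$ they need only lie in $[-1,1]$, which $\mathrm{sign}(h_j)$ does. The inner product then splits as
\begin{equation*}
\inr{h,g}=\sum_{j\in S}\mathrm{sign}(v_j)\,h_j+\sum_{j\notin S}|h_j|\geq -\|h_S\|_1+\|h_{S^c}\|_1=\|h\|_1-2\|h_S\|_1.
\end{equation*}

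To finish, a Cauchy--Schwarz step gives $\|h_S\|_1\leq\sqrt{|S|}\,\|h_S\|_2\leq\sqrt{|S|}\,\|h\|_2$. Combining the sparsity hypothesis $|S|\leq c\rho^2/r^2(\rho)$ with the constraint $\|h\|_2\leq r(\rho)$ yields $\|h_S\|_1\leq\sqrt{c}\,\rho$, and since $\|h\|_1=\rho$,
\begin{equation*}
\inr{h,g}\geq (1-2\sqrt{c})\rho.
\end{equation*}
Choosing the constant $c$ in the hypothesis small enough that $1-2\sqrt{c}\geq 4/5$ (for instance, $c\leq 1/100$) delivers $\Delta(\rho)\geq 4\rho/5$, uniformly in $h$. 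There is no real obstacle in this argument; the one delicate move is the choice of subgradient on $S^c$, matching the signs of $h$, so that the $S^c$-part of the inner product collapses to an $\ell_1$-norm and drives the whole bound. The mild price one pays is that the constant $c$ in the hypothesis must be an absolute small numerical constant, which is harmless.
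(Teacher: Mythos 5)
Your proof is correct and is essentially the argument behind the cited result (Lemma~4.2 of \cite{LM_reg_comp}), which the paper invokes without reproducing: one takes a subgradient of $\|\cdot\|_1$ at the sparse approximant $v$, chooses its free coordinates on $\mathrm{supp}(v)^c$ to match $\mathrm{sign}(h_j)$, and bounds $\|h_{\mathrm{supp}(v)}\|_1$ by Cauchy--Schwarz using $\|h\|_2\le r(\rho)$ and the sparsity hypothesis. The only caveat, which you correctly flag, is that $c$ must be a small absolute constant (e.g.\ $c\le 1/100$) so that $1-2\sqrt{c}\ge 4/5$.
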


\noindent
If $N\gtrsim s \log(ed/s)$ and if there exists a $s$-sparse vector in $t^*+(\rho/20)B_1^d$, Lemma~\ref{lem:sparse_equa_LASSO} and the choice of $r(\cdot)$ in \eqref{eq:r_function_LASSO} imply that for $\sigma = \norm{\zeta}_{L^{q_0}}$,
\begin{equation*}
\rho^* \sim_{L, q_0} \sigma s \sqrt{\frac{1}{N}\log\left(\frac{ed}{s}\right)} \mbox{ and } r^2(\rho^*)\sim \frac{\sigma^2 s}{N}\log\left(\frac{ed}{s}\right)
\end{equation*}then $\rho^*$ satisfies the sparsity equation and $r^2(\rho^*)$ is the rate of convergence of the LASSO (cf. \cite{LM_reg_comp}).

\section{Main results}\label{sub:main_result} 
\subsection{Performances of the estimators}

\noindent
Theorem \ref{theo:basic-combining-loss-and-reg} gathers estimation error bounds satisfied by the estimators $\hf_{K,\lambda}^{(j)}$ for $j=1, 2$ defined in Section~\ref{sec:estimators}.

\begin{Theorem}\label{theo:basic-combining-loss-and-reg} Grant Assumptions \ref{ass:Mom2F},~\ref{ass:margin} and \ref{ass:small-ball} and let $r_Q$, $r_M$ anr $r$ denote the functions introduced in Definition~\ref{def:the-three-paraemters} for 
\begin{equation*}
\gamma_Q = \min\left(\frac{1}{661\theta_0}, \frac{1}{1764\theta_r}\right), \gamma_M = \frac{\eps}{ 168} \mbox{ and } \eps = \frac{3}{331\theta_0^2}.
\end{equation*} Let $\rho^*$ be defined in \eqref{eq:SE} and let $K^*$ denote the smallest integer such that 
\[
K^*\ge \max\left( \frac{8K_o}{7}, \frac{N \eps^2 r^2(\rho^*)}{336\theta_m^2}\right)\enspace.
\] 
For all $K\ge 1$, let $\rho_K$ be a solution of  $r^2(\rho_K)= [16 \theta_m^2/(\eps^2\alpha)]\sqrt{K/N}$. Assume that for every $i\in \cI$, $K\in[K^*, N]$ and $f\in F\cap B(\bayes,\rho_K)$, 
 \begin{equation}\label{eq:robust_theo_basic}
 2(P_i-P) \zeta (f-\bayes)\leq \eps\max\left(\frac{16 \theta_m^2}{\eps^2\alpha}\frac{K}{N}, r^2_M(\rho_K, \gamma_M), \norm{f-\bayes}^2_{L^2_P}\right)\enspace.
 \end{equation}
For all $K\in[K^*,N/(84\param{r}^2\theta_0^2)]$, on an event $\Omega_1(K)$ such that $\bP(\Omega_1(K))\ge 1-4\exp(-K/1008)$, 
the estimators $\hf_{K,\lambda}^{(j)}$ for $j=1, 2$ defined in Section~\ref{sec:estimators} satisfy
\begin{equation*}
 \norm{\est^{(1)}-\bayes}\leq \rho_K\enspace,
\end{equation*}and 
\begin{equation*}
 \norm{\est^{(2)}-\bayes}\leq \rho_K,\quad\norm{\est^{(2)}-\bayes}_{L^2_P}\leq 340 \theta_0\theta_r r(\rho_K)
\end{equation*}when the regularization parameter satisfy
\[
\frac{20\eps}{7}\frac{r^2(\rho_K)}{\rho_K}<\lambda<\frac{10}{331\theta_0^2}\frac{r^2(\rho_K)}{\rho_K}\enspace.
\]
\end{Theorem}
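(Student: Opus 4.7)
The plan is to apply Le Cam's general inequality \eqref{GenRiskBound2} to both criteria simultaneously. The statistical guarantees reduce to controlling the size of the beating set $\mathcal{B}_{K,\lambda}(\bayes) = \{g \in F : T_{K,\lambda}(g,\bayes) \ge 0\}$. Using the antisymmetry \eqref{eq:DefTest} coming from \eqref{prop:Opposes}, this amounts to proving that, on an event $\Omega_1(K)$ of probability at least $1 - 4\exp(-K/1008)$, the oracle $T_{K,\lambda}$-beats every $f \in F$ that is too far from $\bayes$ in the regularization norm, and moreover that on $\mathcal{B}_{K,\lambda}(\bayes)$ the $L^2_P$-distance to $\bayes$ is controlled by $r(\rho_K)$ up to constants. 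Combined with the MOM isometry of Lemma~\ref{lem:Isometry} this yields $R_{K,\lambda}^{\text{reg}}(\bayes) \le \rho_K$ and $R_{K,\lambda}^{(2)}(\bayes) \le 85\theta_r r(\rho_K)$. Since each $\hat f^{(j)}_{K,\lambda}$ minimizes the corresponding criterion, the conclusions on $\hat f^{(1)}_{K,\lambda}$ and $\hat f^{(2)}_{K,\lambda}$ then follow from \eqref{GenRiskBound2}.

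By the decomposition \eqref{eq:SE1} one must show that $\text{MOM}_K[(f-\bayes)^2 - 2\zeta(f-\bayes)] + \lambda(\|f\|-\|\bayes\|) > 0$ whenever $f$ is bad. A homogeneity/convexity argument (replacing $f$ by the point on the segment $[\bayes,f]$ at regularization distance $\rho_K$ from $\bayes$, which belongs to $F$ by convexity) reduces the analysis to a sphere of radius $\rho_K$ in the regularization norm, and we split according to the $L^2_P$-geometry of $f - \bayes$. In the \emph{regularization regime} $\|f - \bayes\|_{L^2_P} \le r(\rho_K)$, one has $f \in H_{\rho_K}$, and since $\rho_K \ge \rho^*$ the sparsity equation \eqref{eq:SE} together with the definition of $\Gamma_{\bayes}(\rho_K)$ yield $\|f\|-\|\bayes\| \ge 4\rho_K/5 - 2(\rho_K/20) \gtrsim \rho_K$. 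The lower bound $\lambda > 20\eps r^2(\rho_K)/(7\rho_K)$ then makes the regularization gain dominate the MOM fluctuations of the quadratic and multiplier terms, which on $\Omega_1(K)$ are controlled in terms of $r^2(\rho_K)$ and $\theta_m^2 K/N$. In the \emph{quadratic regime} $\|f-\bayes\|_{L^2_P} > r(\rho_K)$, Assumption~\ref{ass:small-ball} read through Proposition~\ref{prop:SBP=L2/L1} forces a strict majority of blocks $B_k$ to satisfy $P_{B_k}(f-\bayes)^2 \gtrsim \theta_0^{-2}\|f-\bayes\|_{L^2_P}^2$; this quadratic MOM then dominates both the multiplier MOM, bounded using Assumption~\ref{ass:margin}, and the term $\lambda\,|\|f\|-\|\bayes\||$ thanks to the upper bound $\lambda<10 r^2(\rho_K)/(331\theta_0^2\rho_K)$.

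The technical heart of the proof is the construction of $\Omega_1(K)$ on which the uniform MOM bounds above hold. The argument proceeds in two steps. For a fixed $f$, a Chebyshev estimate on each block combined with a binomial counting step shows that at least a fixed fraction of the $K$ blocks realises the desired one-sided deviation; Assumption~\ref{ass:Mom2F} is used to relate $L^2_{P_i}$-variances to the $L^2_P$-geometry on the informative blocks, and the condition $K^* \ge 8K_o/7$ ensures that the at most $K_o$ corrupted blocks (those with $B_k \cap \cO \ne \emptyset$) can be absorbed by a shift in the quantile level, since $K_o \le 7K/8$ leaves a clear majority of clean blocks once one tightens the quantile from $1/2$ to $1/4$ or $3/4$ via \eqref{prop:Cone}--\eqref{prop:Sum}. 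Uniformization over the localized shifted class $\{f-\bayes : f \in F, \|f-\bayes\| \le \rho_K, \|f-\bayes\|_{L^2_P} \le r(\rho_K)\}$ is then achieved by Rademacher symmetrization on the clean blocks, producing precisely the complexity fixed points $r_Q$ and $r_M$ of Definition~\ref{def:the-three-paraemters} with the constants $\gamma_Q$, $\gamma_M$ specified in the theorem. A bounded-differences (McDiarmid) inequality applied to the vector of block indicators yields the exponential factor $4\exp(-K/1008)$.

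The main obstacle is bookkeeping the numerous constants so that the regularization gain in the first regime actually beats the MOM fluctuations of magnitude $\eps r^2(\rho_K)$, while in the second regime the small-ball lower bound of order $\theta_0^{-2}\|f-\bayes\|_{L^2_P}^2$ beats the multiplier MOM of order $\theta_m\|f-\bayes\|_{L^2_P}\sqrt{K/N}$; the definition $r^2(\rho_K) \sim \theta_m^2\sqrt{K/N}$ in the theorem is precisely what reconciles these two scales uniformly in $K \in [K^*, N/(84\theta_r^2\theta_0^2)]$, while the upper endpoint of this interval is dictated by the small-ball step (it forces the clean-block count to exceed the threshold imposed by $\theta_r$ and $\theta_0$). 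The extension from the i.i.d.\ case to the heterogeneous-$P_i$ setting is handled through Assumption~\ref{ass:Mom2F} and hypothesis \eqref{eq:robust_theo_basic}, which allows replacing each block mean $\overline P_{B_k}\zeta(f-\bayes)$ by $P\zeta(f-\bayes)=0$ up to a residual absorbed in $r^2(\rho_K)$.
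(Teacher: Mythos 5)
Your proposal is correct and follows essentially the same route as the paper's own proof: the Le Cam reduction to bounding the beating set $\mathcal B_{K,\lambda}(\bayes)$, the quadratic/multiplier decomposition with quantile inequalities, the homogeneity argument reducing to the sphere $\norm{f-\bayes}=\rho_K$, the split into the sparsity-equation regime and the quadratic regime, the block-counting/Lipschitz-surrogate/bounded-differences/symmetrization-contraction machinery behind $\Omega_Q$, $\Omega_M$ and the isometry lemma, and the final $4\theta_0\cdot 85\theta_r$ bookkeeping giving $340\theta_0\theta_r r(\rho_K)$. No substantive deviation or gap to report.
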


To the best of our knowledge, Theorem~\ref{theo:basic-combining-loss-and-reg} provides the first statistical performance of an estimator operating in such a ``nasty'' environment: the dataset may be corrupted by complete outliers,  the informative data may be heavy-tailed and their distribution $P_i$ for $i\in\cI$ is only asked to have a $L^2$ and $L^1$ geometry over $F-f^*$ equivalent to that of $P$. 
The most surprising thing is that the rate we obtain for $K=K^*$ in Theorem~\ref{theo:basic-combining-loss-and-reg}, i.e. $r(\rho_{K^*})$ when the number of outliers $K_o$ is less than $N r^2(\rho^*)$ is the minimax rate we would have gotten in a very good i.i.d. subgaussian framework with independent noise.
 This means that the quality of a dataset does not have to be as good as it is classically assumed in the literature to make estimation possible: all we need is that a large fraction of the data should be independent (even though we believe that some ``weak dependence'' could also be introduced) and distributed according to distributions inducing  $L^1$ and $L^2$ geometries equivalent to the $L^2_P$ one. 

In Theorem~\ref{theo:basic-combining-loss-and-reg}, $K$ can be as small as the infimum between the number of outliers and $N$ times the minimax rate of convergence. Henceforth, if the optimal rate is known, as in \cite{LugosiMendelson2016}, Theorem~\ref{theo:basic-combining-loss-and-reg} shows that Le Cam's champion of the median of means tournament with $K=K^*$ reaches the same performances as any champion in this paper.
 Theorem~\ref{theo:basic-combining-loss-and-reg} is thus an extension of \cite{LugosiMendelson2016} to a non-i.d. corrupted setting for Le Cam's champion. Moreover, our control improves theirs if the upper bound on the radius of $\bayes$ used in \cite{LugosiMendelson2016} is pessimistic (cf. Example~\ref{Ex:LugMen} in Section~\ref{sec:Example}).
%

Assumption~\ref{ass:Mom2F} is automatically satisfied in the i.i.d. case and so is Assumption~\eqref{eq:robust_theo_basic}. 
%
Theorem~\ref{theo:basic-combining-loss-and-reg} goes beyond this i.i.d. setup, relaxing 
the i.d. assumptions into proximity assumptions between $L_{P_i}^2$ and $L^2_P$ geometries, for informative data. 
%

\paragraph*{Risk bounds in $\ell_1^d$ regularization}Let us now 
compute explicit values of $\rho_K$ and $\lambda\sim r^2(\rho_K)/\rho_K$ in the $\ell_1^d$-regularization case. Let $K\in[N]$ and $\sigma = \norm{\zeta}_{L^{q_0}}$. The equation $K=c r(\rho_K)^2N$ is solved by
\begin{equation}\label{eq:LASSO_choice_rho_K}
 \rho_K\sim_{L,q_0} \frac{K}{\sigma} \sqrt{\frac{1}{N}\log^{-1}\left(\frac{\sigma^2 d}{K}\right)}
 \end{equation} for the $r(\cdot)$ function defined in \eqref{eq:r_function_LASSO}.
 Therefore,  \begin{equation}\label{eq:reg_param_lasso}
 \lambda \sim \frac{r^2(\rho_K)}{\rho_K} \sim_{L,q_0} \sigma \sqrt{\frac{1}{N}\log\left(\frac{e\sigma d}{\rho_K\sqrt{N}}\right)} \sim_{L,q_0} \sigma \sqrt{\frac{1}{N}\log\left(\frac{e\sigma^2 d}{K}\right)}\enspace.
 \end{equation}
The regularization parameter depends on the ``level of noise'' $\sigma$, the $L^{q_0}$-norm of $\zeta$. This parameter is unknown in practice. Nevertheless, it can be estimated and replaced by this estimator in the regularization parameter as in \cite[Sections~5.4 and 5.6.2]{MR3307991}. 

\subsection{Adaptive choice of $K$ by Lepski's method}
The main drawback of Theorem~\ref{theo:basic-combining-loss-and-reg} is that optimal rates are only achieved when $K\approx K^*$. Since $K^*$ is unknown, it cannot be used in general. This issue is tackled in this section by Lepski's method. 

Let $K_1=K^*$ and $K_2=N/(84\theta_0^2\param{r}^2)$ be defined as in Theorem~\ref{theo:basic-combining-loss-and-reg}. For any integer $K\in[K_1,K_2]$, let $\rho_K$ and $\lambda$ be defined as in Theorem~\ref{theo:basic-combining-loss-and-reg} and for $j=1, 2$ denote by $\hat f_K^{(j)} =\est^{(j)}$ for this choice of $\lambda$. 
These estimators are the building blocks of the following confidence sets.
For all $f\in F$, let 
\begin{equation*}
\hat B^{(2)}_K(f)=\left\{g\in F : \MOM{K}{|g-f|}\le 28900\theta_r^2\theta_0 r(\rho_K)\right\}\enspace.
\end{equation*}
 Now, let
\begin{equation*}
R_K^{(1)}=B(\esti^{(1)},\rho_K),\quad R_K^{(2)}=B(\esti^{(2)},\rho_K)\cap \hat B^{(2)}_K(\hat f_K^{(2)}) 
\end{equation*}
and for every $j=1, 2$, let
\[
\hat{K}^{(j)}=\inf\left\{K\in[K_2]: \bigcap_{J=K}^{K_2}R_J^{(j)}\ne \emptyset\right\}\enspace.
\]



Finally, define adaptive (to $K$) estimators via Lepski's method: for $j=1, 2$, $
\hf_{LE}^{(j)} \in \bigcap_{J=\hat{K}^{(j)}}^{K_2}R_J^{(j)}$.

\begin{Theorem}\label{theo:main_baby_Lepski}
Grant assumptions and notations of Theorem~\ref{theo:basic-combining-loss-and-reg}. There exist absolute constants $(\cabs{i})_{1\le i\le 2}$ such that the estimators $\hf_{LE}^{(j)}$  for $j=1, 2$ 
satisfy for  every $K\in[ K^*, N/(84\theta_0^2\param{r}^2)]$, with probability at least $1-\cabs{1}\exp\left(-\cabs{2}K\right)$, 
\begin{equation*}
\norm{\hf_{LE}^{(1)}-\bayes}\leq 2\rho_{K}\enspace,
\end{equation*}and
 \begin{equation*}
  \norm{\hf_{LE}^{(2)}-\bayes}\leq 2\rho_{K},\quad\norm{\hf_{LE}^{(2)}-\bayes}_{L^2_P}\leq 680 \theta_r\theta_0 r(2\rho_K)\enspace.
 \end{equation*}
In particular, for $K=K^*$,  if the following regularity assumption holds: there exists an absolute constant $c_3$ such that for all $\rho>0$, $r(2\rho)\leq c_3 r(\rho)$ then with probability at least  
\[
1-c_1\exp\left(-c_4 N \max\left(\frac{K_o}{N}, \frac{r^2(\rho^*)}{\theta_0^4\theta_m^2}\right)\right)
\]
then, 
\begin{equation*}
\norm{\hf_{LE}^{(2)}-\bayes}_{L^2_P}\leq c_5 \max\left(\theta_0^4\theta_m^2\frac{K_o}{N}, r^2(\rho^*)\right)\enspace.
\end{equation*}
\end{Theorem}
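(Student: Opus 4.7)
The argument is a Lepski-type selection scheme layered on top of Theorem~\ref{theo:basic-combining-loss-and-reg}. Fix $K\in[K^*, K_2]$ with $K_2=N/(84\theta_0^2\theta_r^2)$ and form the intersection event $\Omega^*(K)=\bigcap_{J=K}^{K_2}\Omega_1(J)$ of the good events provided by that theorem for every block number $J\ge K$. Each $\Omega_1(J)$ has probability at least $1-4\exp(-J/1008)$, so a union bound together with the geometric decay $\sum_{J\ge K}\exp(-J/1008)\le c_1\exp(-c_2 K)$ yields $\bP(\Omega^*(K))\ge 1-c_1\exp(-c_2 K)$, which is the advertised probability estimate.

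On $\Omega^*(K)$, I first verify that $\bayes\in R_J^{(j)}$ for every $J\in[K,K_2]$ and $j=1,2$. For $j=1$, Theorem~\ref{theo:basic-combining-loss-and-reg} gives $\|\hat f_J^{(1)}-\bayes\|\le \rho_J$, so $\bayes\in B(\hat f_J^{(1)},\rho_J)=R_J^{(1)}$. For $j=2$, the regularisation inclusion $\bayes\in B(\hat f_J^{(2)},\rho_J)$ is the same; the nontrivial piece is $\bayes\in \hat B_J^{(2)}(\hat f_J^{(2)})$, that is, $\MOM{J}{|\hat f_J^{(2)}-\bayes|}\le 28900\,\theta_r^2\theta_0\, r(\rho_J)$. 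I would obtain this from the $L^2_P$ bound $\|\hat f_J^{(2)}-\bayes\|_{L^2_P}\le 340\,\theta_0\theta_r\, r(\rho_J)$ in Theorem~\ref{theo:basic-combining-loss-and-reg} combined with Lemma~\ref{lem:Isometry} (which says $\MOM{J}{\cdot}$ is equivalent to $\|\cdot\|_{L^2_P}$ on the localised ball) and Assumption~\ref{ass:small-ball} (which bridges $L^1$ and $L^2$); the explicit constant $28900\,\theta_r^2\theta_0$ factors as $85\theta_r\times 340\theta_0\theta_r$, exactly the product of the isometry constant and the previous risk bound. Consequently $\bayes\in\bigcap_{J=K}^{K_2}R_J^{(j)}$, this intersection is non-empty, and hence $\hat K^{(j)}\le K$ on $\Omega^*(K)$.

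Because $\hat K^{(j)}\le K$, the selected estimator satisfies $\hf_{LE}^{(j)}\in R_K^{(j)}$. For $j=1$ this gives $\|\hf_{LE}^{(1)}-\hat f_K^{(1)}\|\le \rho_K$; combining with $\|\hat f_K^{(1)}-\bayes\|\le \rho_K$ by a triangle inequality yields $\|\hf_{LE}^{(1)}-\bayes\|\le 2\rho_K$. For $j=2$ the same triangle inequality handles the $\|\cdot\|$ bound. For the $L^2_P$ error I would translate $\MOM{K}{|\hf_{LE}^{(2)}-\hat f_K^{(2)}|}\le 28900\,\theta_r^2\theta_0\, r(\rho_K)$ back into an $L^2_P$ bound via Lemma~\ref{lem:Isometry}, observing that $\hf_{LE}^{(2)}-\hat f_K^{(2)}$ lies in the regularisation ball of radius $2\rho_K$, which is precisely why $r(2\rho_K)$ (rather than $r(\rho_K)$) becomes the appropriate scale; adding $\|\hat f_K^{(2)}-\bayes\|_{L^2_P}\le 340\,\theta_0\theta_r\, r(\rho_K)\le 340\,\theta_0\theta_r\, r(2\rho_K)$ by monotonicity of $r$ produces the claimed factor $680\,\theta_0\theta_r$.

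Finally, I would specialise $K=K^*$. By the definitions of $K^*$ and $\rho_K$ in Theorem~\ref{theo:basic-combining-loss-and-reg} (recalling $\epsilon\sim \theta_0^{-2}$, hence $\epsilon^2\sim\theta_0^{-4}$), one has $K^*\asymp\max\bigl(K_o,\,N r^2(\rho^*)/(\theta_0^4\theta_m^2)\bigr)$, so $r^2(\rho_{K^*})\lesssim\theta_0^4\theta_m^2\,(K_o/N)\vee r^2(\rho^*)$; under the regularity hypothesis $r(2\rho)\le c_3 r(\rho)$ this is preserved up to a constant when $r(\rho_K)$ is replaced by $r(2\rho_K)$, and the probability estimate becomes $1-c_1\exp(-c_4 K^*)$, which matches the stated formula. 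The main technical obstacle is the careful bookkeeping of constants in the two $\MOM{}{\cdot}\leftrightarrow\|\cdot\|_{L^2_P}$ conversions: the hard-coded $28900\,\theta_r^2\theta_0$ baked into $\hat B_K^{(2)}$ must be large enough to guarantee $\bayes\in\hat B_K^{(2)}(\hat f_K^{(2)})$ on $\Omega^*(K)$, yet small enough that the reverse conversion still produces the final $680\,\theta_r\theta_0\, r(2\rho_K)$. Apart from this constant tracking and the necessity that the isometry hold on a ball of radius $2\rho_K$ and not just $\rho_K$, the proof is pure union bound, triangle inequality, and monotonicity of $r$.
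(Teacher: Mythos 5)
Your overall architecture coincides with the paper's: a union bound over $J\ge K$ of the events of Theorem~\ref{theo:basic-combining-loss-and-reg}, the verification that $\bayes$ belongs to every $R_J^{(j)}$ (with the threshold in $\hat B_J^{(2)}$ correctly identified as the product $85\param{r}\times 340\theta_0\param{r}$ of the upper isometry constant and the $L^2_P$ risk bound), hence $\hat K^{(j)}\le K$ and $\hf_{LE}^{(j)}\in R_K^{(j)}$, followed by the triangle inequality for the regularization norm. Up to and including the bounds $\norm{\hf_{LE}^{(j)}-\bayes}\le 2\rho_K$, your argument matches the paper's proof.

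The one step that does not go through as written is the final $L^2_P$ conversion. You propose to apply Lemma~\ref{lem:Isometry} to the difference $\hf_{LE}^{(2)}-\hat f_K^{(2)}$ in order to turn $\MOM{K}{|\hf_{LE}^{(2)}-\hat f_K^{(2)}|}$ into an $L^2_P$ bound. But the lower isometry $Q_{\eta,K}(|f-\bayes|)\ge (4\theta_0)^{-1}\norm{f-\bayes}_{L^2_P}$ is established only for differences \emph{centered at the oracle}: it rests on Assumption~\ref{ass:small-ball} and on the fixed point $r_Q$, both of which are formulated on $F-\bayes$ and not on $F-F$. For a general convex class $F$, the function $\hf_{LE}^{(2)}-\hat f_K^{(2)}$ need not be of the form $f-\bayes$ for some $f\in F$, so the lemma does not apply to it. The paper circumvents this by applying the lower isometry to $\hf_{LE}^{(2)}-\bayes$ itself (admissible since $\hf_{LE}^{(2)}\in B(\bayes,2\rho_K)$, which is exactly why the lemma must be invoked at radius $2\rho_K$) and then splitting the resulting quantile through the sub-additivity property \eqref{prop:Sum},
\begin{equation*}
Q_{1/4,K}\bigl(|\hf_{LE}^{(2)}-\bayes|\bigr)\le \MOM{K}{|\hf_{LE}^{(2)}-\hat f_K^{(2)}|}+Q_{3/4,K}\bigl(|\hat f_K^{(2)}-\bayes|\bigr)\enspace,
\end{equation*}
both terms of which are controlled on the good event. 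Replacing your $L^2_P$-level triangle inequality by this quantile-level decomposition repairs the step; the remaining constant bookkeeping and the specialization to $K=K^*$ under the regularity assumption $r(2\rho)\le c_3 r(\rho)$ are consistent with what the paper does (the paper itself is not fully careful with the numerical constants at this point, so the discrepancy between your final constant and $680\,\param{r}\theta_0$ is not held against you).
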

Recall an optimality result from \cite{LM13}. Assume that all $(X_i, Y_i), i\in[N]$ are distributed according to  $(X,Y^{\bayes})$, where $\bayes\in F$, $Y^{\bayes}=\bayes(X) + \zeta$ and $\zeta$ is a centered Gaussian variable with variance $\sigma$ independent of $X$. Assume that $F$ is $L$-subgaussian : for every $f\in F$ and $p\ge 2$, $\norm{f}_{L^p}\leq L\sqrt{p} \norm{f}_{L^2}$. Then, \cite[Theorem~A${}^{\prime}$]{LM13} proves that if $\tilde f_N$ is an estimator such that for every $f^*\in F$ and every $r>0$, with probability at least $1-c_0\exp(- \sigma^{-1}r^2 N/c_0)$, $\norm{\tilde f_N-\bayes}_{L^2_P}\leq \zeta_N$, then necessarily
\begin{equation}\label{eq:minimax_rate_LM13}
\zeta_N\gtrsim \min\left(r, {\rm diam}(F, L^2_P)\right).
\end{equation}
When $Y^{\bayes}=\bayes(X) + \zeta$, $c\sim 1/\param{m}\sim 1/\sigma$. Applying this result to $r=r(\rho_K)$ for some given $K\geq K^*$ shows no procedure can estimate $\bayes$ in $L^2_P$ uniformly over $F$ with confidence at least $1-c_0\exp(-K/c_0)$ at a rate better than $r(\rho_K)$ (we implicitly assumed that $r(\rho_K)\leq {\rm diam}(F, L^2_P)$ since $r(\rho_K)$ can obviously be replaced by $r(\rho_K)\wedge {\rm diam}(F, L^2_P)$ in all results). Moreover, this rate is minimax since \cite[Theorem~A]{LM13} also shows that the ERM over $\rho_K B$, $\hat f^{ERM}_N \in\argmin_{f\in \rho_{K}B} P_N\ell_f$, satisfies $\norm{\hat f^{ERM}_N - \bayes}_{L^2}\lesssim r(\rho_K)$ with probability at least $1-c_0\exp(-\sigma^{-1} r^2(\rho_{K})N/c_0)$ when $\sigma\gtrsim r_Q(\rho_{K})$. 

Theorem~\ref{theo:main_baby_Lepski} shows that $\hat f_{LE}$ achieves the same rate of convergence with the same exponentially high confidence as a minimax estimator does in the Gaussian regression model (with independent noise). 
These rates are achieved here under very weak stochastic assumptions allowing the presence of outliers, without assuming that the regression function lies in $F$ or that the data are i.i.d.. Compared to \cite{LugosiMendelson2016}, using a Lepski method, we don't have to \emph{choose} the integer $K$ in advance, we let the data decide the best choice and automatically get an estimator with the correct minimax rate of convergence. Moreover, the regularization parameter is chosen adaptively, which yields to exact minimax rates and, since this minimax rate is not required to build the estimators, these are naturally adaptive. 

\paragraph*{Adaptive results in $\ell_1^d$ regularization}
The following result follows from Theorem~\ref{theo:main_baby_Lepski} together with the computation of $\rho^*$, $r_Q$, $r_M$ and $r$ from the previous sections. This is a slight extension of Theorem~\ref{theo:mom_lasso} to the case where the oracle $t^*$ is not exactly sparse but close to a sparse vector.

\begin{Theorem}\label{theo:mom_lasso_sharp}
Assume that $X$ is isotropic and  
\begin{enumerate}
	\item[o)] there exist $s\in[N]$ such that $N\geq c_1 s \log(ed/s)$ and $v\in\R^d$ such that $\norm{t^*-v}_1\leq \sigma s \sqrt{\log\left(ed/s\right)/N}/20$ and $|{\rm supp}(v)|\leq s$.  
	\item[i')] $|\cI|\geq N/2$ and $|\cO|\leq c_1 s \log(ed/s)$,
	\item[ii)] $ \zeta=Y-\inr{X, t^*} \in L_{q_0}$ for some $q_0>2$
	\item[iii')] for every $1\leq p\leq C_0 \log(ed)$, $\norm{\inr{X,e_j}}_{L_p}\leq L \sqrt{p}\norm{\inr{X,e_j}}_{L_2}$ where $(e_j)_{j\in[d]}$ is the canonical basis of $\R^d$ and $C_0$ is some absolute constant,
	\item [iv')] there exists $\theta_0$ such that  $\norm{\inr{X, t}}_{L^1}\leq \theta_0\norm{\inr{X, t}}_{L^2}$, for all $t\in \R^d$,
	\item [v)] there exists $\theta_m$ such that ${\rm var}(\zeta\inr{X, t})\leq \param{m}^2 \norm{t}_2^2$, for all $t\in\R^d$.
\end{enumerate}
The MOM-LASSO estimator $\hat{t}_{LE}$ such that $\hat f_{LE}=\inr{\hat{t}_{LE}, \cdot}$ satisfies, with probability at least $1-c_2 \exp(-c_3 s \log(ed/s))$,  for every $1\leq p\leq 2$, 
\begin{equation*}
\norm{\hat{t}_{LE} - t^*}_p\leq c_4(L, \theta_m)\norm{\zeta}_{L_{q_0}}s^{1/p}\sqrt{\frac{1}{N}\log\left(\frac{ed}{s}\right)}\enspace,
\end{equation*}
\end{Theorem}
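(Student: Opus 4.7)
\medskip

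\noindent\textbf{Proof proposal.} The plan is to deduce Theorem~\ref{theo:mom_lasso_sharp} from Theorem~\ref{theo:main_baby_Lepski} applied to the LASSO framework, and then to obtain the $\ell_p$ bound for $1\le p\le 2$ by interpolation between the regularization bound ($p=1$) and the $L^2_P$ bound (which equals $\ell_2$ thanks to isotropy of $X$).

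\smallskip

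\noindent\emph{Step 1: verifying the hypotheses of Theorem~\ref{theo:main_baby_Lepski}.} Identify $F=\{\inr{\cdot,t}:t\in\R^d\}$ with $\R^d$, so that $\norm{f-f^*}=\norm{t-t^*}_1$ and, by isotropy of $X$, $\norm{f-f^*}_{L^2_P}=\norm{t-t^*}_2$. In the i.i.d.\ informative regime ($P_i=P$ for $i\in\cI$), Assumption~\ref{ass:Mom2F} is satisfied with $\param{r}=1$ and condition \eqref{eq:robust_theo_basic} is trivial. Assumption~\ref{ass:margin} follows directly from hypothesis~$v)$ with the stated $\param{m}$. Assumption~\ref{ass:small-ball} follows from hypothesis~$iv')$, which states $\norm{\inr{X,t}}_{L^2}\le\theta_0\norm{\inr{X,t}}_{L^1}$ for every $t\in\R^d$, and in particular for $t=t^*-t'$ with $t'\in\R^d$. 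Hypotheses $ii)$ and $iii')$ are exactly Assumption~\ref{ass:shahar_theo16}, which powers the computations of $r_Q$ and $r_M$ presented after \eqref{eq:Gauss_mean_width}.

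\smallskip

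\noindent\emph{Step 2: solving the sparsity equation and computing the link function.} Hypothesis $o)$ asserts the existence of an $s$-sparse vector $v\in t^*+(\rho^*/20)B_1^d$ for $\rho^*\sim_{L,q_0}\sigma s\sqrt{\log(ed/s)/N}$, so Lemma~\ref{lem:sparse_equa_LASSO} yields $\Delta(\rho^*)\ge 4\rho^*/5$ as soon as $s\lesssim\rho^{*2}/r^2(\rho^*)$. Plugging the formula \eqref{eq:r_function_LASSO} (in the high-dimensional regime $N\le d$) into this consistency condition gives the announced $\rho^*\sim\sigma s\sqrt{\log(ed/s)/N}$ and $r^2(\rho^*)\sim\sigma^2 s\log(ed/s)/N$; this exactly reproduces the computation recalled just after Lemma~\ref{lem:sparse_equa_LASSO}. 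Moreover the function $r$ in \eqref{eq:r_function_LASSO} satisfies $r(2\rho)\le c_3 r(\rho)$ at $\rho=\rho^*$, since only the logarithmic factor inside the square root is affected by a multiplicative change of $\rho$.

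\smallskip

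\noindent\emph{Step 3: the choice of $K$ and the deviation.} With the above values, $K^*\sim\max(K_o,\,Nr^2(\rho^*)/\param{m}^2)\lesssim s\log(ed/s)$, where the bound on $K_o$ uses hypothesis $i')$ with $c_1$ small enough to absorb the $8/7$ factor. The regularization parameter $\lambda\sim r^2(\rho^*)/\rho^*\sim \sigma\sqrt{N^{-1}\log(ed/s)}$ fits in the admissible window required in Theorem~\ref{theo:basic-combining-loss-and-reg}. Applying Theorem~\ref{theo:main_baby_Lepski} with $K=K^*$ and using the regularity of $r$, on an event of probability at least $1-c_2\exp(-c_3 s\log(ed/s))$ one gets simultaneously
\[
\norm{\hat t_{LE}-t^*}_1\le 2\rho^*\lesssim \sigma s\sqrt{\tfrac{1}{N}\log(ed/s)},\qquad
\norm{\hat t_{LE}-t^*}_2\lesssim r(2\rho^*)\lesssim \sigma\sqrt{\tfrac{s}{N}\log(ed/s)}.
\]

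\smallskip

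\noindent\emph{Step 4: interpolation for $1\le p\le 2$.} By log-convexity of $p\mapsto\log\norm{x}_p$ (Hölder), for every $x\in\R^d$ and $1\le p\le 2$,
\[
\norm{x}_p\le \norm{x}_1^{2/p-1}\,\norm{x}_2^{2-2/p}.
\]
Plugging the two bounds from Step~3 into this inequality with $x=\hat t_{LE}-t^*$ yields
\[
\norm{\hat t_{LE}-t^*}_p\lesssim \bigl(\sigma s\sqrt{N^{-1}\log(ed/s)}\bigr)^{2/p-1}\bigl(\sigma\sqrt{sN^{-1}\log(ed/s)}\bigr)^{2-2/p}=\sigma s^{1/p}\sqrt{\tfrac{1}{N}\log(ed/s)},
\]
which is exactly the announced rate. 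The main technical step is the consistency check $s\lesssim \rho^{*2}/r^2(\rho^*)$ needed in Lemma~\ref{lem:sparse_equa_LASSO}; all remaining work amounts to plugging the LASSO-specific formulas into Theorem~\ref{theo:main_baby_Lepski}.
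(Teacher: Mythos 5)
Your proposal is correct and follows essentially the same route as the paper's proof: invoke Theorem~\ref{theo:main_baby_Lepski} with the LASSO-specific computations of $r(\rho_K)$ from \eqref{eq:r_function_LASSO}, $\rho_K$ from \eqref{eq:LASSO_choice_rho_K} and the sparsity equation via Lemma~\ref{lem:sparse_equa_LASSO}, then combine the resulting $\ell_1$ and $\ell_2$ bounds through the interpolation inequality $\norm{v}_p\leq \norm{v}_1^{2/p-1}\norm{v}_2^{2-2/p}$. The only difference is that you spell out the verification of Assumptions~\ref{ass:Mom2F}--\ref{ass:small-ball} and the consistency check $s\lesssim\rho^{*2}/r^2(\rho^*)$, which the paper leaves implicit.
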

In particular, Theorem~\ref{theo:mom_lasso_sharp} shows that, for our estimator contrary to the one in \cite{LugosiMendelson2016}, the sparsity parameter $s$ does not have to be known in advance in the LASSO case.
\proof
It follows from Theorem~\ref{theo:main_baby_Lepski}, the computation of $r(\rho_K)$ from \eqref{eq:r_function_LASSO} and $\rho_K$ in \eqref{eq:LASSO_choice_rho_K} that with probability at least $1-c_0\exp(- cr(\rho_K)^2N/\overline{C})$, $\norm{\hat{t}_{LE} - t^*}_1\leq \rho_{K^*}$ and $\norm{\hat t_{LE} - t^*}_2\lesssim r(\rho_K)$.
The result follows since $\rho_{K^*}\sim\rho^* \sim_{L, q_0} \sigma s \sqrt{\frac{1}{N}\log\left(\frac{ed}{s}\right)}$
and $\norm{v}_p\leq \norm{v}_1^{-1+2/p}\norm{v}_2^{2-2/p}$ for all $v\in\R^d$ and $1\leq p\leq2$.

\endproof



\section{Proofs}\label{sec:Proofs}
In all the proof section, we denote by $\bP$ the distribution of $(X_1,\ldots,X_N)$ and $\E$ the corresponding expectation. For any non-empty subset $B\subset [N]$ and any $f:\cX\to\R$ for which it makes sense, let $\overline P_Bf=\frac1{|B|}\sum_{i\in B}P_if$. For any $f\in L^2_P$ and $r\ge 0$, let 
\[
B_2(f,r)=\{g\in L^2_P : \norm{f-g}_{L^2_P}\le r\},\quad S_2(f,r)=\{g\in L^2_P : \norm{f-g}_{L^2_P}= r\}\enspace.
\]We consider the set of indices of blocks $B_k$ containing only informative data:
\begin{equation*}
\cK = \left\{k\in[K]: B_k\subset \cI\right\}.
\end{equation*}

\subsection{Lower Bound on the quadratic process}

\begin{Lemma}\label{lem:UBQP} Grant Assumptions~\ref{ass:Mom2F} and~\ref{ass:small-ball}. Fix $\eta\in (0,1)$, $\rho>0$ and let $\alpha,\gamma_Q,\gamma,x\in (0,1)$ be such that $\gamma\left(1-\alpha-x-32\theta_0 \gamma_Q\right) \ge 1-\eta$. Let $K\in[ K_o/(1-\gamma),N\alpha/(2\theta_0\param{r})^2]$.

There exists an event $\Omega_Q(K, \rho)$ such that $\bP\left(\Omega_Q(K, \rho)\right)\geq 1-\exp(-K\gamma x^2/2)$ on which for all $f\in B(\bayes,\rho)$ if $\norm{f-\bayes}_{L^2_P}\ge r_Q(\rho,\gamma_Q)$ then
\begin{equation*}
\notag Q_{\eta,K}(|f-\bayes|)\ge \frac1{4\theta_0}\norm{f-\bayes}_{L^2_P}\mbox{ and }
 Q_{\eta,K}((f-\bayes)^2)\ge \frac1{(4\theta_0)^2}\norm{f-\bayes}_{L^2_P}^2\enspace.
\end{equation*}
\end{Lemma}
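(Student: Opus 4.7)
I reduce both quantile statements to a single combinatorial fact: on the event to be constructed, uniformly over $f\in F\cap B(\bayes,\rho)$ with $\|f-\bayes\|_{L^2_P}\geq r_Q(\rho,\gamma_Q)$, the number of blocks $k\in[K]$ for which $P_{B_k}|f-\bayes|<\|f-\bayes\|_{L^2_P}/(4\theta_0)$ is strictly less than $\eta K$. The squared bound then follows from the first via the block-level Jensen inequality $P_{B_k}(f-\bayes)^2\geq (P_{B_k}|f-\bayes|)^2$, which promotes every block good for $|f-\bayes|$ to one good for $(f-\bayes)^2$ at the squared threshold. Partition the blocks into informative ones $\cK=\{k:B_k\subset\cI\}$ and contaminated ones; since each outlier corrupts at most one block, the hypothesis $K\geq K_o/(1-\gamma)$ yields $|\cK|\geq K-K_o\geq \gamma K$. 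On informative blocks, Assumption~\ref{ass:small-ball} gives $\overline P_{B_k}|f-\bayes|\geq \|f-\bayes\|_{L^2_P}/\theta_0$, so a bad block forces $(\overline P_{B_k}-P_{B_k})|f-\bayes|>3\|f-\bayes\|_{L^2_P}/(4\theta_0)$. Combined with the variance bound $\mathrm{Var}(P_{B_k}|f-\bayes|)\leq K\theta_r^2\|f-\bayes\|_{L^2_P}^2/N$ supplied by Assumption~\ref{ass:Mom2F} and the upper constraint $K\leq N\alpha/(2\theta_0\theta_r)^2$, Chebyshev yields $\bP(\text{block }k\text{ bad})\leq \alpha$ for every fixed $f$ and $k\in\cK$.

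Next I would uniformize this estimate. I would replace the indicator of the bad event by a bounded Lipschitz surrogate $\psi:\R\to[0,1]$ equal to $1$ on $[0,1/(4\theta_0)]$ and to $0$ on $[1/(2\theta_0),\infty)$, and study the process $f\mapsto \sum_{k\in\cK}\psi\bigl(P_{B_k}|f-\bayes|/\|f-\bayes\|_{L^2_P}\bigr)$. Its summands are block-wise independent, take values in $[0,1]$, and have mean $\leq \alpha$ by the previous step, so Talagrand's concentration inequality controls its supremum up to its expectation plus a deviation $x|\cK|$ on an event of probability at least $1-\exp(-K\gamma x^2/2)$. To bound the mean width, I would peel dyadically over the range of $\|f-\bayes\|_{L^2_P}\geq r_Q(\rho,\gamma_Q)$ to homogenize the denominator, symmetrize at the block level, apply Talagrand's contraction to $\psi$, and then symmetrize inside each informative block to convert block-indexed Rademachers into individual ones. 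This reduces the mean width to an index-level Rademacher complexity $\E\sup_{f\in F_{\bayes,\rho,r}}\bigl|\sum_{i\in J}\eps_i(f-\bayes)(X_i)\bigr|$ over the localized class of Definition~\ref{def:the-three-paraemters}, with $J=\bigcup_{k\in\cK}B_k\in\cJ$. The defining property of $r_Q$ bounds this by $\gamma_Q|J|r$; tracking the Lipschitz, peeling, and symmetrization constants yields an overall contribution at most $32\theta_0\gamma_Q|\cK|$, matching the constant in the standing hypothesis.

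Putting the three contributions together, on the event $\Omega_Q(K,\rho)$ just constructed the total number of blocks violating $P_{B_k}|f-\bayes|\geq \|f-\bayes\|_{L^2_P}/(4\theta_0)$ is at most $(K-|\cK|)+(\alpha+x+32\theta_0\gamma_Q)|\cK|\leq K\bigl[1-\gamma(1-\alpha-x-32\theta_0\gamma_Q)\bigr]\leq \eta K$, the last step being exactly the hypothesis $\gamma(1-\alpha-x-32\theta_0\gamma_Q)\geq 1-\eta$. This forces $Q_{\eta,K}(|f-\bayes|)\geq \|f-\bayes\|_{L^2_P}/(4\theta_0)$, and the squared bound follows by the block-level Jensen inequality. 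The main obstacle is the symmetrization step: the Rademacher complexity in Definition~\ref{def:the-three-paraemters} is indexed by individual data points, while MOM processes symmetrize naturally at the block level, and bridging the two requires the contraction for the Lipschitz $\psi$ together with an inner symmetrization inside each informative block. Both devices rely on Assumption~\ref{ass:Mom2F} to compare $L^2_{P_i}$ with $L^2_P$ and on $|J|=|\cK|N/K$ being large enough to place $J$ in $\cJ$, after which the defining property of $r_Q$ can be applied.
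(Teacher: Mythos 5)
Your proposal is correct and follows essentially the same route as the paper's proof: count the bad blocks via a Lipschitz surrogate of the indicator, concentrate the count with a bounded-differences-type inequality, symmetrize and contract down to the localized Rademacher complexity defining $r_Q(\rho,\gamma_Q)$, and deduce the squared bound from $P_{B_k}(f-\bayes)^2\ge (P_{B_k}|f-\bayes|)^2$. The only cosmetic difference is that the paper avoids dyadic peeling: it uses the star-shapedness of $F$ around $\bayes$ to map $n_f=(f-\bayes)/\norm{f-\bayes}_{L^2_P}$ directly onto $F\cap B(\bayes,\rho)\cap S_2(\bayes,r_Q)$, which is how the clean constant $32\theta_0\gamma_Q$ comes out.
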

\begin{proof}
For all $f\in F-\{f^*\}$, let $n_f=(f-\bayes)/\norm{f-\bayes}_{L^2_P}$. For $i\in \cI$, $P_i|n_f|\ge \theta_0^{-1}$ by Assumption~\ref{ass:small-ball} and $P_in_f^2\le \param{r}^2$ by Assumption~\ref{ass:Mom2F}. By Markov's inequality, for all $k\in \cK$,
\[
\bP\left(|P_{B_k}|n_f|-\overline P_{B_k}|n_f||>\frac{\param{r}}{\sqrt{\alpha|B_k|}}\right)\le \alpha
\]
and so
\[
\bP\left(P_{B_k}|n_f|\ge \frac1{\theta_0}-\frac{\param{r}}{\sqrt{\alpha|B_k|}}\right)\ge 1-\alpha\enspace.
\]
Since $K\le[\alpha/(2\theta_0\param{r})]^2N$ then $|B_k|=N/K\geq [\alpha/(2\theta_0\param{r})]^2$ and so we have 
\begin{equation}\label{eq:markov_1}
\bP\left(P_{B_k}|n_f|\ge \frac1{2\theta_0}\right)\ge 1-\alpha\enspace.
\end{equation}

Let $\phi$ denote the function defined by $\phi(t)=(t-1)I(1\le t\le 2)+I(t\ge 2)$ for all $t\in\R_+$ and, for all $f\in F-\{f^*\}$, let $Z(f)=\sum_{k\in[K]}I(4\theta_0P_{B_k}|n_f|\ge 1)$. Since $I(t\ge 1)\ge \phi(t)$ for any $t\geq0$ then $Z(f)\ge \sum_{k\in\cK}\phi\left(4\theta_0P_{B_k}|n_f|\right)$. Since $\phi(t)\ge I(t\ge 2)$ for all $t\geq0$, it follows from \eqref{eq:markov_1} that
\begin{align*}
\E\left[\sum_{k\in \cK}\phi\left(4\theta_0P_{B_k}|n_f|\right)\right]\ge \sum_{k\in \cK}\bP\left(4\theta_0P_{B_k}|n_f|\ge 2\right)\ge |\cK|(1-\alpha)\enspace.
\end{align*}
Therefore, for all $f\in F$, we have
\begin{align*}
 Z(f)\ge |\cK|(1-\alpha)+\sum_{k\in \cK}\left(\phi\left(4\theta_0P_{B_k}|n_f|\right)-\E\left[\phi\left(4\theta_0P_{B_k}|n_f|\right)\right]\right)\enspace.
\end{align*}
Let $\cF=\{f\in B(\bayes, \rho) : \norm{f-\bayes}_{L^2_P}\geq r_Q(\rho,\gamma_Q)\}$. By the bounded difference inequality (cf. \cite[Lemma~1.2]{MR1036755} or \cite[Theorem~6.2]{BouLugMass13}, there exists an event $\Omega(x)$ such that $\bP(\Omega(x))\ge 1-\exp(-x^2|\cK|/2)$, on which
\begin{multline*}
\sup_{f\in \cF 
} \left|\sum_{k\in \cK}\left(\phi\left(4\theta_0P_{B_k}|n_f|\right)-\E\left[\phi\left(4\theta_0P_{B_k}|n_f|\right)\right]\right)\right|\\
\le \E\sup_{f\in \cF
 } \left|\sum_{k\in \cK}\left(\phi\left(4\theta_0P_{B_k}|n_f|\right)-\E\left[\phi\left(4\theta_0P_{B_k}|n_f|\right)\right]\right)\right|+|\cK|x\enspace.
\end{multline*}
By the Gin{\'e}-Zynn symmetrization argument \cite[Lemma~11.4]{BouLugMass13},
\begin{align*}
 \E\sup_{f\in \cF } \left|\sum_{k\in \cK}\left(\phi\left(4\theta_0P_{B_k}|n_f|\right)-\E\left[\phi\left(4\theta_0P_{B_k}|n_f|\right)\right]\right)\right|\le2\E\sup_{f\in \cF } \left|\sum_{k\in \cK}\epsilon_k\phi\left(4\theta_0P_{B_k}|n_f|\right)\right|
\end{align*}where $(\eps_k)_{k\in\cK}$ are independent Rademacher variables independent of the data. Moreover, $\phi$ is 1-Lipschitz and $\phi(0)=0$. By the contraction principle (cf. \cite[Theorem~4.12]{LT:91} or \cite[Theorem 11.6]{BouLugMass13}),

\[
\E\sup_{f\in \cF
 } \left|\sum_{k\in \cK}\epsilon_k\phi\left(4\theta_0P_{B_k}|n_f|\right)\right| \le 4\theta_0\E\sup_{f\in \cF
  } \left|\sum_{k\in \cK}\epsilon_kP_{B_k}|n_f|\right| \enspace.
\]
Applying again the symmetrization and contraction principles, we get, 
\[
\E\sup_{f\in \cF} \left|\sum_{k\in \cK}\epsilon_kP_{B_k}|n_f|\right|\le \frac{4K}N\E\sup_{f\in \cF} \left|\sum_{i\in\cup_{k\in \cK}B_k}\epsilon_in_f(X_i)\right|\enspace.
\]
It follows from the convexity of $F$ that for all $f\in \cF$,  $r_{Q}(\rho,\gamma_Q)n_f\in F-\bayes$ and it also belongs to the $L^2_P$ sphere of radius $r_Q(\rho,\gamma_Q)$. Therefore, by definition of $r_Q:=r_Q(\rho,\gamma_Q)$ and for $J = \cup_{k\in \cK}B_k$,
\begin{align*}
\E\sup_{f\in \cF} &\left|\sum_{i\in J}\epsilon_in_f(X_i)\right|=\frac{1}{r_Q}\E\sup_{f\in F\cap S_2(\bayes,r_Q)} \left|\sum_{i\in J}\epsilon_i(f-\bayes)(X_i)\right| \le \gamma_Q\frac{|\cK|N}{K}\enspace. 
\end{align*}

In conclusion, on $\Omega(x)$, all $f\in \cF$ is such that
\begin{align*}
 Z(f)\ge |\cK|\left(1-\alpha-x-32\theta_0\gamma_Q \right)\ge (1-\eta)K\enspace.
\end{align*}
In other words, on $\Omega(x)$, for all $f\in \cF$, there exist at least $(1-\eta)K$ blocks $B_k$ such that $P_{B_k}|n_f|\ge (4\theta_0)^{-1}$. For any of these blocks $B_k$, $P_{B_k}n_f^2\ge (P_{B_k}|n_f|)^2$, hence, on $\Omega(x)$, $Q_{\eta,K}[|n_f|]\ge (4\theta_0)^{-1}$ and $Q_{\eta,K}[n_f^2]\ge (4\theta_0)^{-2}$. 
\end{proof}

\subsection{Upper Bound on the multiplier process}

\begin{Lemma}\label{lem:proc_multiplicatif} Grant Assumption~\ref{ass:margin}. Fix $\eta\in (0,1)$, $\rho\in(0,+\infty]$, and let $\alpha,\gamma_M,\gamma,x$ and $\eps$ be positive  absolute constants such that $\gamma\left(1-\alpha-x-8 \gamma_M/\eps\right) \ge 1-\eta$. 
Let $K\in[K_o/(1-\gamma),N]$. There exists an event $\Omega_M(K, \rho)$ such that $\bP(\Omega_M(K, \rho))\ge 1-\exp(-\gamma K x^2/2)$ and on $\Omega_M(K, \rho)$, for all $f\in B(\bayes, \rho)$ there is at least $(1-\eta)K$ blocks $B_k$ with $k\in\cK$ such that
\begin{align*}
\left|2(P_{B_k}-\overline P_{B_k})(\zeta(f-\bayes))\right| \leq \eps\max\left(\frac{16 \theta_m^2}{\eps^2\alpha}\frac{K}{N}, r^2_M(\rho, \gamma_M), \norm{f-\bayes}^2_{L^2_P}\right)\enspace.
\end{align*} 
\end{Lemma}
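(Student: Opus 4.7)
The plan is to mimic the proof of Lemma~\ref{lem:UBQP} line by line, replacing the small-ball step by Chebyshev's inequality and Assumption~\ref{ass:margin}. Fix $\rho>0$, write $r_M=r_M(\rho,\gamma_M)$, and for $f\in F$, $k\in[K]$ set $D_{k,f}=2(P_{B_k}-\overline{P}_{B_k})(\zeta(f-\bayes))$ together with $\tau_f=\max\bigl(16\theta_m^2 K/(\eps^2\alpha N),\,r_M^2,\,\norm{f-\bayes}_{L^2_P}^2\bigr)$. The target inequality is $|D_{k,f}|\le\eps\tau_f$ on at least $(1-\eta)K$ blocks, so it is enough to bound below uniformly in $f\in B(\bayes,\rho)$ the count $Z(f)=\sum_{k\in\cK}I(|D_{k,f}|\le\eps\tau_f)$.

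For the pointwise step I would apply Chebyshev on a block $B_k$ with $k\in\cK$ (where $(X_i,Y_i)_{i\in B_k}$ are independent and Assumption~\ref{ass:margin} gives $\text{var}_{P_i}(\zeta(f-\bayes))\le\theta_m^2\norm{f-\bayes}_{L^2_P}^2$): $\bP(|D_{k,f}|>2t)\le K\theta_m^2\norm{f-\bayes}_{L^2_P}^2/(Nt^2)$. Choosing $t=\theta_m\norm{f-\bayes}_{L^2_P}\sqrt{K/(\alpha N)}$ makes this at most $\alpha$, and a two-case AM--GM argument (splitting on whether $\norm{f-\bayes}_{L^2_P}^2$ is larger or smaller than $16\theta_m^2K/(\eps^2\alpha N)$) yields $2t\le(\eps/2)\tau_f$ in either case. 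Hence $\bP(|D_{k,f}|\le\eps\tau_f)\ge 1-\alpha$, so $\E Z(f)\ge(1-\alpha)|\cK|$ for every fixed $f$.

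To turn this pointwise bound into a uniform one, I would reuse verbatim the machinery of Lemma~\ref{lem:UBQP}: lower bound $Z(f)$ by $\sum_{k\in\cK}\phi(|D_{k,f}|/(\eps\tau_f))$ for the same $1$-Lipschitz truncation $\phi$ vanishing at $0$, apply the bounded differences inequality to obtain an event of probability at least $1-\exp(-x^2|\cK|/2)$ on which $\sup_{f\in B(\bayes,\rho)}|\sum_{k\in\cK}(\phi-\E\phi)|$ is at most its expectation plus $x|\cK|$, then use Gin\'e--Zinn symmetrization at the block level, Talagrand's contraction to strip $\phi$, and a second symmetrization that breaks the block structure in order to bound the expectation by a numerical multiple of $(K/N)\,\E\sup_{f}\bigl|\sum_{i\in J}\eps_i\zeta_i(f-\bayes)(X_i)\bigr|$ with $J=\cup_{k\in\cK}B_k$. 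The new feature compared with the quadratic process is that the summands are now $\eps_i\zeta_i(f-\bayes)(X_i)$, which matches exactly the definition of $r_M(\rho,\gamma_M)$: after peeling any $f$ with $\norm{f-\bayes}_{L^2_P}>r_M$ onto the $L^2_P$-sphere of radius $r_M$ (using convexity of $F$ as in Lemma~\ref{lem:UBQP}), the defining inequality of $r_M$ bounds the multiplier supremum by $\gamma_M|J|r_M^2$, which in turn produces the term $8\gamma_M/\eps$ once one divides by $\eps\tau_f$ and tracks the numerical constants from the two symmetrizations and the contraction.

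Collecting the three contributions yields, uniformly in $f\in B(\bayes,\rho)$, $Z(f)\ge|\cK|(1-\alpha-x-8\gamma_M/\eps)\ge\gamma K(1-\alpha-x-8\gamma_M/\eps)\ge(1-\eta)K$, where $|\cK|\ge K-K_o\ge\gamma K$ follows from $K\ge K_o/(1-\gamma)$ (the reason for that hypothesis), and the probability estimate $1-\exp(-\gamma K x^2/2)$ follows from the same lower bound on $|\cK|$. The main delicate step, as in Lemma~\ref{lem:UBQP}, will be the peeling/rescaling that couples a supremum over the \emph{unbounded} ball $B(\bayes,\rho)$ to the localized complexity $r_M(\rho,\gamma_M)$; a secondary bookkeeping issue is to line up the AM--GM constants so that the factor $16$ appears in the first term of $\tau_f$ and the factor $8/\eps$ in the Rademacher contribution.
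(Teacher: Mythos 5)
Your proposal is correct and follows essentially the same route as the paper: Chebyshev plus Assumption~\ref{ass:margin} for the pointwise block bound (with the same AM--GM split producing the factor $16$), then the bounded-difference/symmetrization/contraction machinery of Lemma~\ref{lem:UBQP} with the peeling onto the $L^2_P$-sphere of radius $r_M$ to invoke the definition of $r_M(\rho,\gamma_M)$. The only detail to fix in the write-up is directional: since here you count blocks where the deviation is \emph{small}, you cannot lower bound $Z(f)$ by an increasing $\phi$ of $|D_{k,f}|/(\eps\tau_f)$; instead write $Z(f)=|\cK|-\sum_{k\in\cK}I\bigl(|D_{k,f}|/(\eps\tau_f)\ge 1\bigr)$ and upper bound the bad count by a $2$-Lipschitz $\psi$ with $I(t\ge 1)\le\psi(t)\le I(t\ge 1/2)$ --- this complement trick is also exactly where the extra factor $2$ comes from that turns the raw Rademacher bound $4\gamma_M/\eps$ into the $8\gamma_M/\eps$ appearing in the hypothesis.
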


\begin{proof}
For all $k\in [K]$ and $f\in F$, define $W_k(f)=2(P_{B_k}-\overline P_{B_k})\left(\zeta(f-\bayes)\right)$ and 
\begin{equation*}
\gamma_k(f)= \eps\max\left(\frac{16 \theta_m^2}{\eps^2\alpha}\frac{K}{N}, r^2_M(\rho, \gamma_M), \norm{f-\bayes}^2_{L^2_P}\right)\enspace.
\end{equation*}

Let $f\in F$ and $k\in \mathcal K$. It follows from Markov's inequality and Assumption~\ref{ass:margin} that
\begin{align}\label{eq:multi_1}
\nonumber \bP&\left[2\Big|W_k(f)\Big|\ge \gamma_k(f)\right]\leq \frac{4\E \left[\Big(2(P_{B_k}-\overline P_{B_k})(\zeta (f-\bayes))\Big)^2\right]}{ \frac{16\param{m}^2}{\alpha}\norm{f-\bayes}_{L^2_P}^2\frac{K}{N}}\\
& \leq \frac{\alpha \sum_{i\in B_k}{\rm var}_{P_i}(\zeta (f-\bayes))}{|B_k|^2 \param{m}^2\norm{f-\bayes}_{L^2_P}^2\frac{K}N} \leq \frac{\alpha \param{m}^2 \norm{f-\bayes}_{L^2_P}^2}{|B_k|  \param{m}^2\norm{f-\bayes}_{L^2_P}^2\frac{K}N} =\alpha \enspace.
\end{align}
%
Denote $J=\cup_{k\in\mathcal K}B_k$ and remark that $J\in \cJ$ as defined in Definition~\ref{def:the-three-paraemters}. Let $r_M := r_M(\rho,\gamma_M)$ for simplicity. We have
\begin{align*}
 \E&\sup_{f\in B(f^*, \rho)}\sum_{k\in\mathcal K}\epsilon_k\frac{W_k(f)}{\gamma_k(f)}
\leq 2\E \sup_{f\in B(f^*, \rho)}\left|\sum_{k\in\mathcal K}  \frac{\eps_k(P_{B_k}-\overline P_{B_k})(\zeta (f-\bayes))}{\eps\max(r_M^2, \norm{f-\bayes}^2_{L^2_P})}\right|\\
&\leq \frac{2}{\epsilon r_M^2}
\E \left[\sup_{f\in B(f^*, \rho)\setminus B_2(\bayes, r_M)}\left|\sum_{k\in\mathcal K} \eps_k(P_{B_k}-\overline P_{B_k})\left(\zeta r_M\frac{f-\bayes}{\norm{f-\bayes}_{L^2_P}}\right)\right|\right.\\
&\qquad\qquad\qquad\;\left.\vee\sup_{f\in B(f^*, \rho)\cap B_2(\bayes, r_M)}\left|\sum_{k\in\mathcal K} \eps_k(P_{B_k}-\overline P_{B_k})\left(\zeta (f-\bayes)\right)\right|\right]\\
&\leq \frac{2}{\epsilon r_M^2}
\E \sup_{f\in B(f^*, \rho)\cap B_2(\bayes, r_M)}\left|\sum_{k\in\mathcal K} \eps_k(P_{B_k}-\overline P_{B_k})\left(\zeta (f-\bayes)\right)\right|\enspace,
\end{align*}where in the last but one inequality we used that $F$ is convex and the same argument as in the proof of Lemma~\ref{lem:UBQP}. Moreover, since the random variables $((\zeta_i(f-f^*)(X_i) -P_i\zeta (f-f^*)):i\in\cI)$ are centered and independent, the symmetrization argument applies and, by definition of $r_M$,
\begin{align}\label{eq:multi-2}
 \nonumber \E\sup_{f\in B(f^*, \rho)}\sum_{k\in\mathcal K}\epsilon_k\frac{W_k(f)}{\gamma_k(f)}
&\leq \frac{4 K}{\epsilon r_M^2N} \E \sup_{f\in B(f^*, \rho) \cap B_2(\bayes, r_M)}\left|\sum_{i\in J} \eps_{i}\zeta_i (f-\bayes)(X_i)\right|\\
&\le \frac{4 K}{\epsilon N}\gamma_M|\cK|\frac NK=\frac{4\gamma_M}{\epsilon}|\cK|\enspace.
\end{align}
Now, let $\psi(t) = (2t-1)I(1/2\leq t \leq 1)+I(t\ge 1)$ for all $t\geq0$ and note that $\psi$ is $2$-Lipschitz, $\psi(0)=0$ and satisfies $I(t\ge 1)\le \psi(t)\le I(t\ge 1/2)$ for all $t\geq0$. Therefore, all $f\in B(f^*, \rho)$ satisfies
\begin{align*}
\sum_{k\in \cK}I&\left(|W_k(f)| < \gamma_k(f)\right)\\
&=|\cK|-\sum_{k\in \cK} I\left(\frac{|W_k(f)|}{\gamma_k(f)}\ge 1\right)\\
&\ge|\cK|-\sum_{k\in \cK}\psi\left(\frac{|W_k(f)|}{\gamma_k(f)}\right)\\
&=|\cK|-\sum_{k\in\cK}\bE \psi\left(\frac{|W_k(f)|}{\gamma_k(f)}\right)-\sum_{k\in \cK}\left[\psi\left(\frac{|W_k(f)|}{\gamma_k(f)}\right)-\bE \psi\left(\frac{|W_k(f)|}{\gamma_k(f)}\right)\right]\\
&\ge |\cK|-\sum_{k\in\cK}\bP\left(\frac{|W_k(f)|}{\gamma_k(f)}\ge \frac12\right)-\sum_{k\in \cK}\left[\psi\left(\frac{|W_k(f)|}{\gamma_k(f)}\right)-\bP\psi\left(\frac{|W_k(f)|}{\gamma_k(f)}\right)\right]\\
&\ge(1-\alpha)|\cK|-\sup_{f\in B(f^*, \rho)}\left|\sum_{k\in \cK}\left[\psi\left(\frac{|W_k(f)|}{\gamma_k(f)}\right)-\bE \psi\left(\frac{|W_k(f)|}{\gamma_k(f)}\right)\right]\right|
\end{align*}where we used \eqref{eq:multi_1} in the last inequality.
The bounded difference inequality ensures that, for all $x>0$, there exists an event $\Omega(x)$ satisfying $\bP(\Omega(x))\ge 1-\exp(-x^2|\cK|/2)$ on which
\begin{multline*}
\sup_{f\in B(f^*, \rho)} \left|\sum_{k\in \cK}\left[\psi\left(\frac{|W_k(f)|}{\gamma_k(f)}\right)-\bE\psi\left(\frac{|W_k(f)|}{\gamma_k(f)}\right)\right]\right|\\
\le \bE \sup_{f\in B(f^*, \rho)}\left|\sum_{k\in \cK}\left[\psi\left(\frac{|W_k(f)|}{\gamma_k(f)}\right)-\bE\psi\left(\frac{|W_k(f)|}{\gamma_k(f)}\right)\right]\right|+|\cK|x\enspace.
\end{multline*}
Furthermore, it follows from the symmetrization argument that
\begin{multline*}
\bE \sup_{f\in B(f^*, \rho)}\left|\sum_{k\in \cK}\left[\psi\left(\frac{|W_k(f)|}{\gamma_k(f)}\right)-\bE\psi\left(\frac{|W_k(f)|}{\gamma_k(f)}\right)\right]\right|\\
\le 2\bE\sup_{f\in B(f^*, \rho)}\left|\sum_{k\in \cK}\epsilon_k\psi\left(\frac{|W_k(f)|}{\gamma_k(f)}\right)\right| 
\end{multline*}
and, from the contraction principle and \eqref{eq:multi-2}, that
\[
\bE\sup_{f\in B(f^*, \rho)}\left|\sum_{k\in \cK}\epsilon_k\psi\left(\frac{|W_k(f)|}{\gamma_k(f)}\right)\right|\le 2\bE \sup_{f\in B(f^*, \rho)}\left|\sum_{k\in \cK}\epsilon_k\frac{|W_k(f)|}{\gamma_k(f)}\right| \le \frac{8\gamma_M}{\eps}|\cK|\enspace.
\] 
In conclusion, on $\Omega(x)$, for all $f\in B(f^*, \rho)$,
\begin{align*}
\sum_{k\in \cK}I\left(|W_k(f)|< \gamma_k(f)\right)&\ge \left(1-\alpha-x-8 \gamma_M/\eps\right)|\cK|\\
&\geq K \gamma \left(1-\alpha-x-8 \gamma_M/\eps\right)\geq (1-\eta)K\enspace.
\end{align*}
\end{proof}

\subsection{An isometry property of $\MOM{K}{\cdot}$ processes}
 Besides the controls of the quadratic and multiplier MOM processes presented in Lemmas~\ref{lem:UBQP} and~\ref{lem:proc_multiplicatif} respectively, the estimation error bounds for the MOM estimators rely on the following isometry property of the MOM processus $f\in F\to \MOM{K}{|f-f^*|}$.
 
\begin{Lemma}\label{lem:Isometry}[Isometry property of the $\MOM{K}{\cdot}$ process]
Grant Assumptions~\ref{ass:Mom2F} and~\ref{ass:small-ball}. Fix $\eta\in (0,1)$, $\rho>0$ and let $\alpha,\gamma_Q, \gamma,x$ denote absolute constants in $(0,1)$ such that $\gamma\left(1-\alpha-x- 4 \theta_r\gamma_Q/\alpha\right)\ge 1-\eta$. Let $K\in [K_o/(1-\gamma), N\alpha/(2\theta_0 \theta_r)^2]$. There exists an event $\Omega_{iso}(K, \rho)\subset\Omega_Q(K, \rho)$ such that $\bP(\Omega_{iso}(K, \rho))\ge1-2\exp\left(-\gamma x^2 K/2\right)$ and on the event  $\Omega_{iso}(K, \rho)$, for all $f\in B(f^*, \rho)$, 
\begin{equation*}
 Q_{1-\eta,K}{|f-\bayes|}\le  \theta_r \norm{f-f^*}_{L^2_P} + \frac{4\theta_r}{\alpha}\max\left(r_Q(\rho, \gamma_Q), \norm{f-\bayes}_{L^2_P}\right) 
\end{equation*} and if  $\norm{f-f^*}_{L^2_P}\geq r_Q(\rho,\gamma_Q)$ then $Q_{\eta,K}{|f-\bayes|}\ge (1/(4\theta_0))\norm{f-\bayes}_{L^2_P}$. 

In particular, for $\eta=1/2$, on the event  $\Omega_{iso}(K, \rho)$, for all $f\in B(f^*, \rho)$, if  $\norm{f-f^*}_{L^2_P}\geq r_Q(\rho,\gamma_Q)$, then 
\begin{equation}\label{eq:iso_MOMK}
\frac{1}{4\theta_0}\norm{f-\bayes}_{L^2_P} \leq  \MOM{K}{|f-f^*|} \leq \theta_r \left(1+\frac{4}{\alpha}\right)\norm{f-\bayes}_{L^2_P}.
\end{equation}
\end{Lemma}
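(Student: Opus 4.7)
The approach is to prove the two displayed inequalities separately and set $\Omega_{iso}(K,\rho)=\Omega_Q(K,\rho)\cap\Omega'(K,\rho)$ for a new event $\Omega'(K,\rho)$ controlling the upper quantile. The lower bound $Q_{\eta,K}(|f-f^*|)\ge \norm{f-f^*}_{L^2_P}/(4\theta_0)$ (under $\norm{f-f^*}_{L^2_P}\ge r_Q(\rho,\gamma_Q)$) is exactly the first conclusion of Lemma~\ref{lem:UBQP} applied to the class $\{|f-f^*|:f\in F\}$, so it already holds on $\Omega_Q(K,\rho)$; the remaining work is the upper bound on $Q_{1-\eta,K}(|f-f^*|)$, after which a union bound yields $\bP(\Omega_{iso}(K,\rho))\ge 1-2\exp(-\gamma x^2K/2)$.

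For the upper bound I would replay the truncation/symmetrisation/contraction proof of Lemma~\ref{lem:UBQP} with the roles of ``large'' and ``small'' swapped. The key normalisation is
\[
u_f=\frac{f-f^*}{\max\!\big(r_Q(\rho,\gamma_Q),\,\norm{f-f^*}_{L^2_P}\big)},
\]
which has $\norm{u_f}_{L^2_P}\le 1$ and, by Assumption~\ref{ass:Mom2F}, $\norm{u_f}_{L^2_{P_i}}\le\theta_r$ for $i\in\cI$. Cauchy--Schwarz yields $\overline P_{B_k}|u_f|\le\theta_r$ on informative blocks, and Markov's inequality gives $\bP(P_{B_k}|u_f|\ge 2\theta_r/\alpha)\le\alpha/2$ pointwise in $f$. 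I would then introduce a $1$-Lipschitz surrogate $\psi$ with $\psi(0)=0$ satisfying
\[
(2\theta_r/\alpha)I(t\ge 4\theta_r/\alpha)\le \psi(t)\le (2\theta_r/\alpha)I(t\ge 2\theta_r/\alpha),
\]
write
\[
\sum_{k\in\cK}I(P_{B_k}|u_f|\le 4\theta_r/\alpha)\ge |\cK|-\frac{\alpha}{2\theta_r}\sum_{k\in\cK}\psi(P_{B_k}|u_f|),
\]
and bound the right-hand supremum over $f\in B(f^*,\rho)$ by the bounded differences inequality (producing the fluctuation $|\cK|x$ on an event of probability $\ge 1-\exp(-x^2|\cK|/2)$) together with Gin\'e--Zinn symmetrisation and two applications of the contraction principle (first for $\psi$, then for $t\mapsto|t|$).

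The geometric crux is identical to that of Lemma~\ref{lem:UBQP}: by convexity of $F$, the rescaled function $r_Q u_f=\lambda(f-f^*)$ with $\lambda=r_Q/\max(r_Q,\norm{f-f^*}_{L^2_P})\in(0,1]$ is such that $f^*+r_Q u_f\in F_{f^*,\rho,r_Q}$, so the definition of $r_Q(\rho,\gamma_Q)$ gives
\[
\bE\sup_{f\in F\cap B(f^*,\rho)}\left|\sum_{i\in J}\epsilon_i u_f(X_i)\right|\le \gamma_Q|J|,\qquad J=\bigcup_{k\in\cK}B_k.
\]
Combining the estimates, the algebraic hypothesis $\gamma(1-\alpha-x-4\theta_r\gamma_Q/\alpha)\ge 1-\eta$ together with $|\cK|\ge \gamma K$ guarantees that on $\Omega'(K,\rho)$ at least $(1-\eta)K$ blocks in $\cK$ satisfy $P_{B_k}|u_f|\le 4\theta_r/\alpha$; rescaling by $\max(r_Q,\norm{f-f^*}_{L^2_P})$ and adding $\overline P_{B_k}|f-f^*|\le\theta_r\norm{f-f^*}_{L^2_P}$ produces the desired upper bound on $Q_{1-\eta,K}(|f-f^*|)$. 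The two-sided bound \eqref{eq:iso_MOMK} is the case $\eta=1/2$: on $\Omega_{iso}(K,\rho)$ at least half of the informative blocks lie below $\theta_r(1+4/\alpha)\norm{f-f^*}_{L^2_P}$ and above $\norm{f-f^*}_{L^2_P}/(4\theta_0)$, so any empirical median $\MOM{K}{|f-f^*|}$ is sandwiched between these values. The main obstacle I anticipate is the bookkeeping around the normalisation $u_f$: this exact choice is what simultaneously keeps $\norm{u_f}_{L^2_{P_i}}\le\theta_r$, makes the Lipschitz constant of $\psi$ absolute (independent of $f$), and embeds $r_Q u_f$ into the localisation $F_{f^*,\rho,r_Q}$ defining $r_Q(\rho,\gamma_Q)$.
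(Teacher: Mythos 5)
Your proposal is correct and follows essentially the same route as the paper: the lower bound is imported directly from Lemma~\ref{lem:UBQP} on $\Omega_Q(K,\rho)$, and the upper bound is obtained by replaying the block-counting argument (Markov bound per block, Lipschitz surrogate, bounded differences, Gin\'e--Zinn symmetrisation, contraction, and the definition of $r_Q$ via the normalisation by $\max(r_Q,\norm{f-f^*}_{L^2_P})$), exactly as in the paper's proof, which mimics Lemma~\ref{lem:proc_multiplicatif} with $W_k(f)=(P_{B_k}-\overline P_{B_k})|f-f^*|$ and $\gamma_k(f)=(4\theta_r/\alpha)\max(r_Q,\norm{f-f^*}_{L^2_P})$. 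The only cosmetic difference is that you bound the uncentered means $P_{B_k}|u_f|$ directly rather than the deviations $W_k(f)$ and then re-add $\overline P_{B_k}|f-f^*|\le\theta_r\norm{f-f^*}_{L^2_P}$; this changes nothing of substance and yields a bound at least as strong as the stated one.
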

\begin{proof}
It follows from Lemma~\ref{lem:UBQP} that on the event $\Omega_Q(K, \rho)$ for all $f\in B(f^*, \rho)$, if $\norm{f-f^*}_{L^2_P}\geq r_Q(\rho, \gamma_Q)$ then $Q_{\eta,K}{|f-\bayes|}\ge (1/(4\theta_0)\norm{f-\bayes}_{L^2_P}$. This yields the ``lower bound'' result in \eqref{eq:iso_MOMK}. 

For the upper bound of the isomorphic result, we essentially repeat the proof of Lemma~\ref{lem:proc_multiplicatif}. Let us just highlight the main differences. We will use the same notation as in the proof of Lemma~\ref{lem:proc_multiplicatif} except that for all $f\in F$, we define 
\begin{equation*}
 W_k(f) = (P_{B_k} - \overline{P}_{B_k})|f-f^*| \mbox{ and } \gamma_k(f) = \frac{4\theta_r}{\alpha}  \max\left(r_Q(\rho, \gamma_Q), \norm{f-f^*}_{L^2_P}\right).
 \end{equation*} 

 It follows from Chebyshev's inequality and Assumption~\ref{ass:Mom2F} that
 \begin{align*}
 \bP\left[2 |W_k(f)|\geq \gamma_k(f)\right]\leq \frac{4  \overline{P}_{B_k}|f-f^*|}{\gamma_k(f)}\leq \frac{4 \theta_r \norm{f-f^*}_{L^2_P}}{\gamma_k(f)}\leq \alpha.
 \end{align*}Moreover, by convexity of $F$, we have, for $r_Q:=r_Q(\rho, \gamma_Q)$, 
\begin{align*}
(\star) &: =\E \sup_{f\in B(f^*, \rho)} \sum_{k\in\cK} \eps_k \frac{W_k(f)}{\gamma_k(f)}\\
& \leq \frac{4\theta_r}{\alpha r_Q} \E \sup_{f\in B(f^*, \rho)\cap S_2(f^*, r_Q)} \left|\sum_{k\in\cK} \eps_k (P_{B_k} - \overline{P}_{B_k})|f-f^*|\right|
\end{align*}and then using a symmetrization argument, we obtain that
\begin{align*}
(\star) \leq \frac{4\theta_r K}{\alpha r_Q N} \E \sup_{f\in B(f^*, \rho)\cap S_2(f^*, r_Q)} \left|\sum_{i\in J} \eps_i (f-f^*)(X_i)\right|\leq \frac{4 \theta_r \gamma_Q |\cK| }{\alpha }.
\end{align*}Finally, using the same argument as in the proof of Lemma~\ref{lem:proc_multiplicatif}, for all $x>0$ there exists an event $\Omega(x)$ such that $\bP(\Omega(x))\geq 1-\exp(-x^2|\cK|/2)$, on which for all $f\in B(f^*, \rho)$, 
\begin{equation*}
\sum_{k\in\cK} I(|W_k(f)|\leq \gamma_k(f))\geq |\cK|(1-\alpha- x -4\theta_r \gamma_Q/\alpha)\geq (1-\eta)|\cK|.
\end{equation*}In particular, on the event $\Omega(x)$, for all $f\in B(f^*, \rho)$ there are more than $(1-\eta)K$ blocks $B_k$ for which, $P_{B_k}|f-f^*|\leq \overline{P}_{B_k}|f-f^*| + \gamma_k(f)$. Now, the result follows from Assumption~\ref{ass:Mom2F} since $\overline{P}_{B_k}|f-f^*|\leq \theta_r\norm{f-f^*}_{L^2_P}$.

 \end{proof}

\subsection{Conclusion to the proof of Theorem~\ref{theo:basic-combining-loss-and-reg}}\label{sub:EndOfProof}
The proof relies on the following proposition.
\begin{Proposition}\label{prop:Useful}
Grant conditions of Theorem~\ref{theo:basic-combining-loss-and-reg}. Let $\gamma_Q=1/(661\theta_0)$, $\gamma_M=\eps/168$ for some $\eps < 7/(662 \theta_0^2)$ and the regularization parameter be such that
\begin{equation*}
\frac{20\eps r^2(\rho_K)}{7 \rho_K}< \lambda< \frac{10 r^2(\rho_K)}{331\theta_0^2 \rho_K}.
\end{equation*}
 The event $\Omega_0(K) = \Omega_Q(K, \rho_K)\cap \Omega_M(K, \rho_K)$ is such that $\bP(\Omega_0(K))\ge 1-2\exp\left(-K/1008\right)$ and on $\Omega_0(K)$ for all $f\in F$ if  $\|f-\bayes\|_{L_P^2}\ge r(\rho_K)$ or $\|f-\bayes\|\ge \rho_K$ then
\begin{equation*}
\text{MOM}_K\left[\ell_f-\ell_{f^*}\right]+\lambda(\|f\|-\|\bayes\|)> 0\enspace. 
\end{equation*}
\end{Proposition}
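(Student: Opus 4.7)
The plan is to exploit the quadratic--multiplier decomposition $\ell_f-\ell_{f^*}=(f-f^*)^2-2\zeta(f-f^*)$ together with the subadditivity property~\eqref{prop:Sum}:
\[
\MOM{K}{\ell_f-\ell_{f^*}}\;\geq\;Q_{1/4,K}\bigl[(f-f^*)^2\bigr]\;-\;Q_{3/4,K}\bigl[2\zeta(f-f^*)\bigr].
\]
I would define $\Omega_0(K)=\Omega_Q(K,\rho_K)\cap\Omega_M(K,\rho_K)$ by applying Lemmas~\ref{lem:UBQP} and~\ref{lem:proc_multiplicatif} with $\eta=1/4$ and tuning the internal constants $\gamma,\alpha,x$ so that both structural inequalities hold under $\gamma_Q=1/(661\theta_0)$, $\gamma_M=\eps/168$ and $\gamma x^2/2\ge 1/1008$; a union bound then gives the stated probability. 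On $\Omega_0(K)$, Lemma~\ref{lem:UBQP} yields the quadratic lower bound $Q_{1/4,K}[(f-f^*)^2]\ge(4\theta_0)^{-2}\|f-f^*\|_{L^2_P}^2$ whenever $f\in B(f^*,\rho_K)$ and $\|f-f^*\|_{L^2_P}\ge r_Q(\rho_K,\gamma_Q)$, while Lemma~\ref{lem:proc_multiplicatif}, combined with the bias control from assumption~\eqref{eq:robust_theo_basic} (applied to blocks $B_k\subset\cI$) and the first-order optimality $P[\zeta(f-f^*)]\le 0$ in the convex class $F$, yields the multiplier upper bound $Q_{3/4,K}[2\zeta(f-f^*)]\le 2\eps\max(r^2(\rho_K),\|f-f^*\|_{L^2_P}^2)$. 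The definition of $\rho_K$ absorbs the $K/N$ and $r_M^2$ pieces of the lemma's max into $r^2(\rho_K)$, and the restriction of good blocks to $\cK$ automatically neutralizes the outliers.

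With these two tools the main argument is a case analysis. If $\|f-f^*\|\ge\rho_K$, set $\alpha_f=\|f-f^*\|/\rho_K\ge 1$ and $g=f^*+\alpha_f^{-1}(f-f^*)\in F\cap S(f^*,\rho_K)$ by convexity of $F$. Positive homogeneity of MOM and the pointwise inequality $\alpha_f(g-f^*)^2\ge(g-f^*)^2$ give $\MOM{K}{\ell_f-\ell_{f^*}}\ge\alpha_f\MOM{K}{\ell_g-\ell_{f^*}}$, and convexity of $\|\cdot\|$ applied to $g=\alpha_f^{-1}f+(1-\alpha_f^{-1})f^*$ gives $\|f\|-\|f^*\|\ge\alpha_f(\|g\|-\|f^*\|)$, so positivity transfers from the sphere outward. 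The complementary regime $\|f-f^*\|<\rho_K$, $\|f-f^*\|_{L^2_P}\ge r(\rho_K)$ is handled directly with the crude bound $\lambda(\|f\|-\|f^*\|)\ge-\lambda\rho_K$. Either way it suffices to prove positivity at any $h\in F$ with $\|h-f^*\|\le\rho_K$ satisfying $\|h-f^*\|_{L^2_P}\ge r(\rho_K)$ or $\|h-f^*\|=\rho_K$. In the sub-case $\|h-f^*\|_{L^2_P}\ge r(\rho_K)$, the quadratic dominates: $\MOM{K}{\ell_h-\ell_{f^*}}\ge[(4\theta_0)^{-2}-2\eps]\|h-f^*\|_{L^2_P}^2$, and since $\lambda\rho_K\le(10/(331\theta_0^2))r^2(\rho_K)\le(10/(331\theta_0^2))\|h-f^*\|_{L^2_P}^2$, the choice $\eps=3/(331\theta_0^2)$ yields a strictly positive prefactor $1/(16\theta_0^2)-6/(331\theta_0^2)-10/(331\theta_0^2)>0$. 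In the remaining sub-case $h$ lies on the sphere with $\|h-f^*\|_{L^2_P}<r(\rho_K)$, i.e.\ $h\in H_{\rho_K}$, and the sparsity equation supplies $z^*\in(\partial\|\cdot\|)_{f^{**}}$ with $f^{**}\in B(f^*,\rho_K/20)$ and $z^*(h-f^*)\ge 4\rho_K/5$; the subdifferential inequality together with $\|z^*\|^*\le 1$ and $\|f^{**}\|\ge\|f^*\|-\rho_K/20$ gives $\|h\|-\|f^*\|\ge 7\rho_K/10$. The lower bound $\lambda>(20\eps/7)r^2(\rho_K)/\rho_K$ then produces $\lambda(\|h\|-\|f^*\|)>2\eps r^2(\rho_K)$, strictly exceeding the multiplier lower bound $-Q_{3/4,K}[2\zeta(h-f^*)]\ge-2\eps r^2(\rho_K)$ while the quadratic part is simply discarded as nonnegative.

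The main obstacle is the joint bookkeeping of constants: in the first sub-case the quadratic prefactor $(4\theta_0)^{-2}$ must strictly dominate the sum $2\eps+10/(331\theta_0^2)$ of multiplier and regularization contributions, which pins down the value $\eps=3/(331\theta_0^2)$ and the upper bound on $\lambda$; in the sparsity sub-case the arithmetic coefficient $7/10$, arising from the $\rho_K/20$-radius in $\Gamma_{f^*}(\rho_K)$ and the $4/5$ threshold in the sparsity equation, has to combine with $\lambda>(20\eps/7)r^2(\rho_K)/\rho_K$ so as to beat exactly $2\eps r^2(\rho_K)$. A secondary subtlety is the three-term decomposition $P_{B_k}[\zeta(f-f^*)]=(P_{B_k}-\overline P_{B_k})[\zeta(f-f^*)]+(\overline P_{B_k}-P)[\zeta(f-f^*)]+P[\zeta(f-f^*)]$ of the multiplier: the three pieces are controlled respectively by Lemma~\ref{lem:proc_multiplicatif}, by assumption~\eqref{eq:robust_theo_basic}, and by the convex-class optimality of $f^*$, and this is the precise mechanism through which the proof tolerates both outliers and drift between the $P_i$'s.
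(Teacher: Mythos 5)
Your proposal is correct and follows essentially the same route as the paper's proof: the same quadratic--multiplier decomposition with the quantile subadditivity, the same invocation of Lemmas~\ref{lem:UBQP} and~\ref{lem:proc_multiplicatif} plus assumption~\eqref{eq:robust_theo_basic} and the first-order optimality of $f^*$, the same homogeneity reduction to the sphere $S(\bayes,\rho_K)$, and the same two sub-cases (quadratic domination versus the sparsity equation). The only differences are cosmetic: you scale the full criterion using convexity of $\norm{\cdot}$ where the paper first passes to the subdifferential surrogate (Lemmas~\ref{lem:Hom1} and~\ref{lem:Hom2}), and your constant bookkeeping matches the paper's up to the same harmless slack.
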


\begin{proof}
 Using \eqref{prop:Cone}, \eqref{prop:Opposes} and \eqref{prop:Sum} together with the quadratic / multiplier decomposition of the excess quadratic loss yields that for all $f\in F$, 
\begin{align}\label{Obj0}
\notag\text{MOM}_K\left[\ell_f-\ell_{f^*}\right] &= \text{MOM}_K\left[(f-\bayes)^2-2\zeta(f-\bayes)\right]\\
&\ge Q_{1/4,K}[(f-\bayes)^2]-2Q_{3/4,K}[\zeta(f-\bayes)]\enspace.
\end{align}

Note that $\gamma(1-\alpha - x - 32\theta_0 \gamma_Q) \geq 1-\eta$ when one chooses
\begin{equation}\label{eq:choice_constants}
\eta = \frac{1}{4}, \gamma = \frac{7}{8},  \alpha = \frac{1}{21}, x = \frac{1}{21}, \gamma_Q = \frac{1}{661 \theta_0}, \gamma_M = \frac{\eps}{168} \mbox{ and } \eps \leq  \frac{1}{64 \theta_0^2}.
\end{equation}For this choice of constants,  Lemma~\ref{lem:UBQP} applies and for $\rho=\rho_K$ we get that there exists an event $\Omega_Q(K, \rho_K)$ with probability larger than $1-\exp(-K/1008)$ and on that event, for all $f\in B(\bayes,\rho_K)$, if $\norm{f-f^*}_{L^2_P}\geq r_Q(\rho_K,\gamma_Q)$ then
\begin{equation}\label{eq:quad_final_proof}
Q_{1/4,K}[(f-\bayes)^2]\ge \frac1{(4\theta_0)^2}\norm{f-\bayes}^2_{L^2_P}\enspace.
\end{equation}
Moreover, for the choice of parameters as in \eqref{eq:choice_constants}, we also have $\gamma(1-\alpha-x-8\gamma_M/\eps)\geq 1-\eta$, hence Lemma~\ref{lem:proc_multiplicatif} applies and for $\rho=\rho_K$ we get that there exists an  event $\Omega_M(K, \rho_K)$ with probability larger than $1-\exp(-K/1008)$ and on that event, for all $f\in B(\bayes,\rho_K)$ there are more than $3K/4$ blocks $B_k$ with $k\in \cK$ such that
\begin{equation*}
|2(P_{B_k}-\overline P_{B_k})\zeta(f-\bayes)|\le \eps\max\left(\frac{16 \theta_m^2}{\eps^2\alpha}\frac{K}{N}, r^2_M(\rho_K, \gamma_M), \norm{f-\bayes}^2_{L^2_P}\right)\enspace.
\end{equation*}
Combining the last result with Assumpion~\eqref{eq:robust_theo_basic}, it follows that on the event $\Omega_M(K, \rho_K)$, for all $f\in B(\bayes,\rho_K)$,
\begin{equation}\label{eq:multi_final_proof}
2Q_{3/4,K}[\zeta(f-\bayes)]\le 2\eps\max\left(\frac{16 \theta_m^2}{\eps^2\alpha}\frac{K}{N}, r^2_M(\rho_K, \gamma_M), \norm{f-\bayes}^2_{L^2_P}\right)\enspace.
\end{equation}

Let us now prove that on the event $\Omega_M(K, \rho_K)\cap \Omega_Q(K, \rho_K)$, one has  for all $f\in B(\bayes,\rho_K)$,
\begin{equation}\label{Obj3}
\text{MOM}_K\left[(f-\bayes)^2-2\zeta(f-\bayes)\right]\ge -2\eps r^2(\rho_K) \enspace.
\end{equation}
Assume that $\Omega_M(K, \rho_K)\cap \Omega_Q(K, \rho_K)$ holds and let $f\in B(\bayes,\rho_K)$. First assume that $\norm{f-f^*}_{L^2_P}\geq r^2(\rho_K)$. Then, it follows from \eqref{Obj0}, \eqref{eq:quad_final_proof} and \eqref{eq:multi_final_proof}, the choice of $\eps$ in \eqref{eq:choice_constants} and the definition of $\rho_K$ that
\begin{align}
\label{Obj2}&\text{MOM}_K\left[(f-\bayes)^2-2\zeta(f-\bayes)\right]\ge\pa{\frac1{(4\theta_0)^2}-2\epsilon}\norm{f-\bayes}^2_{L^2_P} \ge\frac{\norm{f-\bayes}^2_{L^2_P}}{32\theta_0^2}.
\end{align}Now, if $\norm{f-f^*}_{L^2_P}\leq r^2(\rho_K)$ then it follows from \eqref{Obj0}, \eqref{eq:multi_final_proof} and the definition of $\rho_K$ that 
\begin{equation*}
\text{MOM}_K\left[(f-\bayes)^2-2\zeta(f-\bayes)\right]\ge -2\eps r^2(\rho_K)
\end{equation*}and \eqref{Obj3} follows.

\paragraph{Conclusion of the proof when the regularization distance is small (i.e. $\|f-\bayes\|\leq \rho_K$) and the $L^2_P$-distance is large (i.e. $\norm{f-\bayes}_{L_P^2}\ge r(\rho_K)$)} Let $f\in F$ be such that $\|f-\bayes\|\le \rho_K$ and  $\norm{f-\bayes}_{L_P^2}\ge r(\rho_K)$.  It follows from the triangular inequality that $\|f\|-\|\bayes\|\ge -\|f-\bayes\|\ge -\rho_K$. Combining this together with \eqref{Obj2}, it follows that
\begin{equation*}
\text{MOM}_K\left[\ell_f-\ell_{f^*}\right]+\lambda(\|f\|-\|\bayes\|)\ge  \frac{\norm{f-\bayes}^2_{L^2_P}}{32\theta_0^2} - \lambda \rho_K\geq\frac{r^2(\rho_K)}{32\theta_0^2} - \lambda \rho_K>0
\end{equation*} when $\lambda < r^2(\rho_K)/(32\theta_0^2 \rho_K)$.

\paragraph{Conclusion of the proof when the regularization distance is large (i.e. $\|f-\bayes\|\geq \rho_K$): the homogeneity argument}
\begin{Lemma}\label{lem:Hom1}
For all $f\in F$ such that $\|f-\bayes\|\ge \rho_K$ 
\begin{equation*}
\norm{f}-\norm{\bayes}\geq\sup_{z^*\in\Gamma_{\bayes}(\rho_K)}z^*(f-\bayes)-\frac{\rho_K}{10}\enspace.  
\end{equation*}
\end{Lemma}
\begin{proof}
 For every $f^{**}\in F^* +(\rho_K/20)B$ and every $z^*\in(\partial\norm{\cdot})_{f^{**}}$, 
\begin{align*}
\norm{f}&-\norm{\bayes}\geq \norm{f} - \norm{f^{**}} -\norm{f^{**}-\bayes}\geq z^*(f-f^{**})-\frac{\rho_K}{20} \\
&= z^*(f-\bayes)-z^*(f^{**}-\bayes)-\frac{\rho_K}{20}\geq z^*(f-\bayes)-\frac{\rho_K}{10}\enspace.
\end{align*}
\end{proof}

\begin{Lemma}\label{lem:Hom2}
Assume that, for all $f\in F\cap S(\bayes,\rho_K)$, 
\begin{equation}\label{Obj4}
\MOM{K}{(f-\bayes)^2-2\zeta(f-\bayes)}+\lambda\sup_{z^*\in\Gamma_{\bayes}(\rho_K)}z^*(f-\bayes)> \lambda\frac{\rho_K}{10}\enspace.
\end{equation}
Then \eqref{Obj4} holds for all $f\in F$ such that $\norm{f-\bayes}\ge \rho_K$.
\end{Lemma}
\begin{proof}
%
Let $f\in F$ be such that $\norm{f-\bayes}\ge \rho_K$. Define $g=\bayes+\rho_K\frac{f-\bayes}{\|f-\bayes\|}$ and remark that $\norm{g-f^*}_{L^2_P} = \rho_K$ and that, by convexity of $F$, $g\in F$. It follows from \eqref{Obj4} that for $\kappa=\|f-\bayes\|/\rho_K\ge 1$, one has 
\begin{align*}
 &\MOM{K}{(f-\bayes)^2-2\zeta(f-\bayes)}+\lambda\sup_{z^*\in\Gamma_{\bayes}(\rho_K)}z^*(f-\bayes)\\
 &=\MOM{K}{\kappa^2(g-\bayes)^2-2\kappa\zeta(g-\bayes)}+\lambda\kappa\sup_{z^*\in\Gamma_{\bayes}(\rho_K)}z^*(g-\bayes)\\
 &\ge \kappa\left(\MOM{K}{(g-\bayes)^2-2\zeta(g-\bayes)}+\lambda\sup_{z^*\in\Gamma_{\bayes}(\rho_K)}z^*(g-\bayes)\right)\\
 &> \kappa \lambda\frac{\rho_K}{10}\geq \lambda\frac{\rho_K}{10}\enspace. 
\end{align*}  
\end{proof}

Let $f\in F$ be such that $\norm{f-\bayes}\ge \rho_K$. By Lemma~\ref{lem:Hom1},
\begin{align*}
 &\MOM{K}{(f-\bayes)^2-2\zeta(f-\bayes)}+\lambda(\|f\|-\|\bayes\|)\\
 &\ge  \MOM{K}{(f-\bayes)^2-2\zeta(f-\bayes)}+\lambda\sup_{z^*\in\Gamma_{\bayes}(\rho_K)}z^*(f-\bayes)-\lambda\frac{\rho_K}{10}\enspace.
\end{align*}
Therefore, it will follow from Lemma~\ref{lem:Hom2} that 
\[
\MOM{K}{(f-\bayes)^2-2\zeta(f-\bayes)}+\lambda(\|f\|-\|\bayes\|)> 0
\]
if we can prove that for all $g\in F$ such that $\|g-\bayes\|=\rho_K$ one has
\begin{equation}\label{eq:TODO}
 \MOM{K}{(g-\bayes)^2-2\zeta(g-\bayes)}+\lambda\sup_{z^*\in\Gamma_{\bayes}(\rho_K)}z^*(g-\bayes)>\lambda\frac{\rho_K}{10}\enspace.
\end{equation}

Let us now prove that \eqref{eq:TODO} holds. Let $g\in F$ be such that $\|g-\bayes\|=\rho_K$. First assume that $\norm{g-f^*}_{L^2_P}\leq r(\rho_K)$ so that $g\in H_{\rho_K}$. By definition $\sup_{z^*\in\Gamma_{\bayes}(\rho_K)}z^*(g-\bayes)\ge \Delta(\rho_K)$ and, since $\rho_K\ge \rho^*$, $\rho_K$ satisfies the sparsity equation and thus, $\sup_{z^*\in\Gamma_{\bayes}(\rho_K)}z^*(g-\bayes)\ge 4\rho_K/5$. Therefore, thanks to \eqref{Obj3}, when $\lambda> 20\eps r^2(\rho_K)/(7 \rho_K)$, one has
\begin{align*}
\MOM{K}{(g-\bayes)^2  -2\zeta(g-\bayes)}&+\lambda\sup_{z^*\in\Gamma_{\bayes}(\rho_K)}z^*(g-\bayes)\\
 &\ge -2 \eps r^2(\rho_K)+\lambda\frac{4}{5}\rho_K> \lambda\frac{\rho_K}{10}\enspace.
\end{align*}
Finally assume that $\|g-\bayes\|_{L^2_P}\ge r(\rho_K)$. Since $\sup_{z^*\in\Gamma_{\bayes}(\rho_K)}z^*(f-\bayes)\ge -\|f-\bayes\|=-\rho_K$, it follows from \eqref{Obj2} that
\begin{align*}
  &\MOM{K}{(g-\bayes)^2-2\zeta(g-\bayes)}+\lambda\sup_{z^*\in\Gamma_{\bayes}(\rho_K)}z^*(g-\bayes)\\
&\ge\frac1{32\param{0}^2}\norm{g-\bayes}^2_{L^2_P}-\lambda \rho_K \geq\frac{r^2(\rho_K)}{32\param{0}^2}-\lambda \rho_K >  \lambda\frac{\rho_K}{10}
\end{align*}when $\lambda< 10 r^2(\rho_K)/(331\theta_0^2 \rho_K)$.
\end{proof}

\paragraph{End of the proof of Theorem~\ref{theo:basic-combining-loss-and-reg}}
On the event $\Omega_0(K)$ of Proposition~\ref{prop:Useful}, $\cB_{K,\lambda}(\bayes)$ is included in the ball $B(\bayes,\rho_K)$, therefore, by definition of $\est^{(1)}$ (cf. \eqref{GenRiskBound}),
\[
\norm{\est^{(1)}-\bayes}\leq C_{K,\lambda}^{(1)}(\bayes)\le \rho_K\enspace.
\]

Again, by Proposition~\ref{prop:Useful}, on the same event $\Omega_0(K)$, $\cB_{K,\lambda}(\bayes)\subset B(f^*, \rho_K)\cap B_2(\bayes, r(\rho_K))$, hence, on $\Omega_0(K)\cap\Omega_{iso}(K)$, where $\Omega_{iso}(K)$ is an event defined in Lemma~\ref{lem:Isometry}, for all $f\in \cB_{K,\lambda}(\bayes)$, 
\[
\MOM{K}{|f-\bayes|}\le 85\param{r}\norm{f-\bayes}_{L^2_P}\le 85\param{r} r(\rho_K)
\]where $\alpha=1/21$ according to \eqref{eq:choice_constants}. Therefore, $C_{K,\lambda}^{(2)}(\bayes)\le \rho_K$, which implies that $\norm{\est^{(2)}-\bayes}\le \rho_K$ (cf. \eqref{GenRiskBound}) and that $C_{K,\lambda}^{(2)}(\est^{(2)})\le \rho_K$ and therefore, by Lemma~\ref{lem:Isometry}, on $\Omega_0(K)\cap\Omega_{iso}(K)$, either $\norm{\est^{(2)}-\bayes}_{L^2_P}\leq r_Q(\rho_K, \gamma_K)$ and so $\norm{\est^{(2)}-\bayes}_{L^2_P}\leq 340\param{0}\param{r}r(\rho_K)$ or $\norm{\est^{(2)}-\bayes}_{L^2_P}\geq r_Q(\rho_K, \gamma_K)$ and so
\[
\norm{\est^{(2)}-\bayes}_{L^2_P}\le 4\param{0}\MOM{K}{|\est^{(2)}-\bayes|}\le 340\param{0}\param{r}r(\rho_K)\enspace.
\] 

\subsection{Conclusion to the proof of Theorem~\ref{theo:main_baby_Lepski}}
First, it follows from Theorem~\ref{theo:basic-combining-loss-and-reg} that for all $K \in[K_1, K_2]$, with probability at least $1-\cabs{0}\exp(-\cabs{1}K)$, for both $j=1, 2$, $\bayes\in \cap_{J=K}^{K_2} R^{(j)}_K$, so $\widehat K^{(j)}\le K$, which implies that both $\bayes$ and $\ESTI{\text{LE}}{j}$ belong to $B(\ESTI{K,\lambda}{j},\rho_K)$, therefore, $\norm{\bayes-\ESTI{\text{LE}}{j}}\le 2\rho_K$.
%

\noindent

Second, for the $L^2_P$-estimation  error bound of $\ESTI{\text{LE}}{2}$, denote by $r_J=340\param{r}\param{0}r(\rho_J)$ the bound on the $L^2_P$ risk of the estimator $\hat f^{(2)}_J$ obtained in Theorem~\ref{theo:basic-combining-loss-and-reg}. Let $K\in [K_1,K_2]$. It follows from Lemma~\ref{lem:Isometry} for $\rho=2\rho_J, J\geq K$ that there exists absolute constants $\cabs{1},\cabs{2}$ and an event $\Omega_{iso}$ such that $\bP(\Omega_{iso})\geq 1-\cabs{1}\exp(-\cabs{2}K)$ and, on the event $\Omega_{iso}$, for all $J\ge K$, $\eta\in \{1/4,1/2,3/4\}$ and $f\in B(\bayes,2\rho_J)$,
\[
\text{if }\norm{f-\bayes}_{L^2_P}\ge r_Q(2\rho_J,\gamma_Q),\qquad Q_{\eta,J}(|f-\bayes|)
\begin{cases}
\ge  \frac{1}{4\param{0}}\norm{f-\bayes}_{L^2_P}\\
\le 85\param{r}\norm{f-\bayes}_{L^2_P}
\end{cases}
\enspace.
\]
Let $\Omega$ be the event defined as the following intersection:
\begin{equation*}
\Omega =  \bigcap_{J=K}^{K_2}\left\{\norm{\ESTI{J}{2}-\bayes}\le\rho_J \mbox{ and }\norm{\ESTI{J}{2}-\bayes}_{L^2_P}\le r_J\right\} \bigcap  \Omega(K) \bigcap    \Omega_{iso}\enspace.
\end{equation*} It follows from Theorem~\ref{theo:basic-combining-loss-and-reg} that $\bP(\Omega)\ge 1-c_3 \exp(-c_4 K)$. Moreover, on $\Omega$, for all $J\ge K$, 
\[
Q_{3/4,J}\pa{|\bayes-\ESTI{J}{2}|}
\le 85\param{r}r_J\enspace.
\]
So, in particular, $\bayes\in\cap_{J=K}^{K_2}\left\{f\in B(\ESTI{J}{2},\rho_J) : \MOM{J}{|f-\ESTI{J}{2}|}\le 85\param{r}r_J\right\}$. By definition of $\hat K^{(2)}$, this implies that $\hat K^{(2)}\le K$ on $\Omega$. Therefore, on $\Omega$, 
\[
\hat f_{LE}^{(2)} \in \cap_{J=K}^{K_2}\left\{f\in B(\bayes,2\rho_J) : \MOM{J}{|f-\ESTI{J}{2}|}\le 85\param{r}r_J\right\}\enspace.
\]
In particular, 
\[
\MOM{K}{|\hat f_{LE}^{(2)}-\ESTI{K}{2}|}\le 85\param{r}r_K\enspace.
\]
Now on $\Omega_{iso}$, one has for all $f\in B(\bayes,2\rho_K)$, if $\norm{f-f^*}_{L^2_P}\geq r_Q(2\rho_K,\gamma_Q)$ then
\[
Q_{1/4,J}[|f-\bayes|]\ge \frac{1}{4\param{0}}\norm{f-\bayes}_{L^2_P}\enspace.
\] 
Therefore on $\Omega_{iso}$, one has either $\norm{\hat f_{LE}^{(2)}-\bayes}_{L^2_P}\le r_Q(2\rho_K,\gamma_Q)$ or $\norm{\hat f_{LE}^{(2)}-\bayes}_{L^2_P}\ge r_Q(2\rho_K,\gamma_Q)$ and in the latter case, 
\begin{align*}
\norm{\hat f_{LE}^{(2)}-\bayes}_{L^2_P}&\le4\param{0}Q_{1/4,K}[|\hat f_{LE}^{(2)}-\bayes|]\\
&\le 4\param{0}\left(\MOM{K}{\big|\hat f_{LE}^{(2)}-\ESTI{K}{2}\big|}+Q_{3/4,K}(|\ESTI{K}{2}-\bayes|)\right)\\
&\le 680\param{0} \param{r} r_K\enspace.
\end{align*}
\endproof


\bibliographystyle{elsarticle-harv}
\bibliography{biblio}

\def\cprime{$'$}
\begin{thebibliography}{46}
\expandafter\ifx\csname natexlab\endcsname\relax\def\natexlab#1{#1}\fi
\expandafter\ifx\csname url\endcsname\relax
  \def\url#1{\texttt{#1}}\fi
\expandafter\ifx\csname urlprefix\endcsname\relax\def\urlprefix{URL }\fi

\bibitem[{Alon et~al.(1999)Alon, Matias, and Szegedy}]{MR1688610}
Alon, N., Matias, Y., Szegedy, M., 1999. The space complexity of approximating
  the frequency moments. J. Comput. System Sci. 58~(1, part 2), 137--147,
  twenty-eighth Annual ACM Symposium on the Theory of Computing (Philadelphia,
  PA, 1996).
\newline\urlprefix\url{http://dx.doi.org/10.1006/jcss.1997.1545}

\bibitem[{Audibert and Catoni(2011)}]{MR2906886}
Audibert, J.-Y., Catoni, O., 2011. Robust linear least squares regression. Ann.
  Statist. 39~(5), 2766--2794.
\newline\urlprefix\url{http://dx.doi.org/10.1214/11-AOS918}

\bibitem[{Baraud(2011)}]{MR2834722}
Baraud, Y., 2011. Estimator selection with respect to {H}ellinger-type risks.
  Probab. Theory Related Fields 151~(1-2), 353--401.
\newline\urlprefix\url{http://dx.doi.org/10.1007/s00440-010-0302-y}

\bibitem[{Baraud and Birg{\'e}(2009)}]{MR2449129}
Baraud, Y., Birg{\'e}, L., 2009. Estimating the intensity of a random measure
  by histogram type estimators. Probab. Theory Related Fields 143~(1-2),
  239--284.
\newline\urlprefix\url{http://dx.doi.org/10.1007/s00440-007-0126-6}

\bibitem[{Baraud and Birg{\'e}(2016)}]{MR3565484}
Baraud, Y., Birg{\'e}, L., 2016. Rho-estimators for shape restricted density
  estimation. Stochastic Process. Appl. 126~(12), 3888--3912.
\newline\urlprefix\url{http://dx.doi.org/10.1016/j.spa.2016.04.013}

\bibitem[{Baraud et~al.(2017)Baraud, Birg\'e, and Sart}]{BaraudBirgeSart}
Baraud, Y., Birg\'e, L., Sart, M., 2017. A new method for estimation and model
  selection : $\rho$-estimation. Invent. Math. 207~(2), 435--517.

\bibitem[{Baraud et~al.(2014)Baraud, Giraud, and Huet}]{MR3224300}
Baraud, Y., Giraud, C., Huet, S., 2014. Estimator selection in the {G}aussian
  setting. Ann. Inst. Henri Poincar\'e Probab. Stat. 50~(3), 1092--1119.
\newline\urlprefix\url{http://dx.doi.org/10.1214/13-AIHP539}

\bibitem[{Bellec et~al.(2016)Bellec, Lecu{\'e}, and Tsybakov}]{BLT16}
Bellec, P., Lecu{\'e}, G., Tsybakov, A., 2016. Slope meets lasso: Improved
  oracle bounds and optimality. Tech. rep., CREST, CNRS, Universit{\'e} Paris
  Saclay.

\bibitem[{Birg{\'e}(2006)}]{MR2219712}
Birg{\'e}, L., 2006. Model selection via testing: an alternative to (penalized)
  maximum likelihood estimators. Ann. Inst. H. Poincar\'e Probab. Statist.
  42~(3), 273--325.
\newline\urlprefix\url{http://dx.doi.org/10.1016/j.anihpb.2005.04.004}

\bibitem[{Birg{\'e}(2013)}]{MR3186748}
Birg{\'e}, L., 2013. Robust tests for model selection. In: From probability to
  statistics and back: high-dimensional models and processes. Vol.~9 of Inst.
  Math. Stat. (IMS) Collect. Inst. Math. Statist., Beachwood, OH, pp. 47--64.
\newline\urlprefix\url{http://dx.doi.org/10.1214/12-IMSCOLL905}

\bibitem[{Bogdan et~al.(2015)Bogdan, van~den Berg, Sabatti, Su, and
  Cand{\`e}s}]{slope1}
Bogdan, M., van~den Berg, E., Sabatti, C., Su, W., Cand{\`e}s, E.~J., 2015.
  S{LOPE}---{A}daptive variable selection via convex optimization. Ann. Appl.
  Stat. 9~(3), 1103--1140.

\bibitem[{Boucheron et~al.(2013)Boucheron, Lugosi, and Massart}]{BouLugMass13}
Boucheron, S., Lugosi, G., Massart, P., 2013. Concentration Inequalities: A
  Nonasymptotic Theory of Independence. Oxford University Press, iSBN
  978-0-19-953525-5.

\bibitem[{Brownlees et~al.(2015)Brownlees, Joly, and Lugosi}]{MR3405602}
Brownlees, C., Joly, E., Lugosi, G., 2015. Empirical risk minimization for
  heavy-tailed losses. Ann. Statist. 43~(6), 2507--2536.
\newline\urlprefix\url{http://dx.doi.org/10.1214/15-AOS1350}

\bibitem[{B{\"u}hlmann and van~de Geer(2011)}]{MR2807761}
B{\"u}hlmann, P., van~de Geer, S., 2011. Statistics for high-dimensional data.
  Springer Series in Statistics. Springer, Heidelberg, methods, theory and
  applications.
\newline\urlprefix\url{http://dx.doi.org/10.1007/978-3-642-20192-9}

\bibitem[{Catoni(2012)}]{MR3052407}
Catoni, O., 2012. Challenging the empirical mean and empirical variance: a
  deviation study. Ann. Inst. Henri Poincar\'e Probab. Stat. 48~(4),
  1148--1185.
\newline\urlprefix\url{http://dx.doi.org/10.1214/11-AIHP454}

\bibitem[{Chichignoud and Lederer(2014)}]{MR3217454}
Chichignoud, M., Lederer, J., 2014. A robust, adaptive {M}-estimator for
  pointwise estimation in heteroscedastic regression. Bernoulli 20~(3),
  1560--1599.
\newline\urlprefix\url{http://dx.doi.org/10.3150/13-BEJ533}

\bibitem[{de~la Pe{\~n}a and Gin{\'e}(1999)}]{MR1666908}
de~la Pe{\~n}a, V.~H., Gin{\'e}, E., 1999. Decoupling. Probability and its
  Applications (New York). Springer-Verlag, New York.
\newline\urlprefix\url{http://dx.doi.org/10.1007/978-1-4612-0537-1}

\bibitem[{Devroye et~al.(2016)Devroye, Lerasle, Lugosi, and
  Oliveira}]{MR3576558}
Devroye, L., Lerasle, M., Lugosi, G., Oliveira, R.~I., 2016. Sub-{G}aussian
  mean estimators. Ann. Statist. 44~(6), 2695--2725.
\newline\urlprefix\url{http://dx.doi.org/10.1214/16-AOS1440}

\bibitem[{Fan et~al.(2017)Fan, Li, and Wang}]{FanLiWang2016}
Fan, J., Li, Q., Wang, Y., 2017. Estimation of high dimensional mean regression
  in the absence of symmetry and light tail assumptions. J. R. Stat. Soc. Ser.
  B. Stat. Methodol. 79~(1), 247--265.
\newline\urlprefix\url{http://dx.doi.org/10.1111/rssb.12166}

\bibitem[{Giraud(2015)}]{MR3307991}
Giraud, C., 2015. Introduction to high-dimensional statistics. Vol. 139 of
  Monographs on Statistics and Applied Probability. CRC Press, Boca Raton, FL.

\bibitem[{Huber(1964)}]{MR0161415}
Huber, P.~J., 1964. Robust estimation of a location parameter. Ann. Math.
  Statist. 35, 73--101.

\bibitem[{Huber and Ronchetti(2009)}]{HubRonch2009}
Huber, P.~J., Ronchetti, E.~M., 2009. Robust Statistics. Wiley.

\bibitem[{Jerrum et~al.(1986)Jerrum, Valiant, and Vazirani}]{MR855970}
Jerrum, M.~R., Valiant, L.~G., Vazirani, V.~V., 1986. Random generation of
  combinatorial structures from a uniform distribution. Theoret. Comput. Sci.
  43~(2-3), 169--188.
\newline\urlprefix\url{http://dx.doi.org/10.1016/0304-3975(86)90174-X}

\bibitem[{Koltchinskii and Mendelson(2015)}]{MR3431642}
Koltchinskii, V., Mendelson, S., 2015. Bounding the smallest singular value of
  a random matrix without concentration. Int. Math. Res. Not. IMRN~(23),
  12991--13008.
\newline\urlprefix\url{http://dx.doi.org/10.1093/imrn/rnv096}

\bibitem[{Le~Cam(1973)}]{MR0334381}
Le~Cam, L., 1973. Convergence of estimates under dimensionality restrictions.
  Ann. Statist. 1, 38--53.
\newline\urlprefix\url{http://links.jstor.org/sici?sici=0090-5364(197301)1:1<38:COEUDR>2.0.CO;2-V&origin=MSN}

\bibitem[{Le~Cam(1986)}]{MR856411}
Le~Cam, L., 1986. Asymptotic methods in statistical decision theory. Springer
  Series in Statistics. Springer-Verlag, New York.
\newline\urlprefix\url{http://dx.doi.org/10.1007/978-1-4612-4946-7}

\bibitem[{Lecu{\'e} and Mendelson(2013)}]{LM13}
Lecu{\'e}, G., Mendelson, S., 2013. Learning subgaussian classes: Upper and
  minimax bounds. Tech. rep., CNRS, Ecole polytechnique and Technion.

\bibitem[{Lecu{\'e} and Mendelson(2014)}]{LM_compressed}
Lecu{\'e}, G., Mendelson, S., 2014. Sparse recovery under weak moment
  assumptions. Tech. rep., CNRS, Ecole Polytechnique and Technion, to appear in
  Journal of the European Mathematical Society.

\bibitem[{Lecu{\'e} and Mendelson(2016{\natexlab{a}})}]{LM_reg_comp}
Lecu{\'e}, G., Mendelson, S., 2016{\natexlab{a}}. Regularization and the
  small-ball method i: sparse recovery. Tech. rep., CNRS, ENSAE and Technion,
  I.I.T.

\bibitem[{Lecu{\'e} and Mendelson(2016{\natexlab{b}})}]{LM_reg_comp_2}
Lecu{\'e}, G., Mendelson, S., 2016{\natexlab{b}}. Regularization and the
  small-ball method ii: complexity dependent error rates. Tech. rep., CNRS,
  ENSAE and Technion, I.I.T.

\bibitem[{Ledoux and Talagrand(1991)}]{LT:91}
Ledoux, M., Talagrand, M., 1991. Probability in {B}anach spaces. Vol.~23 of
  Ergebnisse der Mathematik und ihrer Grenzgebiete (3) [Results in Mathematics
  and Related Areas (3)]. Springer-Verlag, Berlin, isoperimetry and processes.

\bibitem[{Lepski(1991)}]{MR1147167}
Lepski, O.~V., 1991. Asymptotically minimax adaptive estimation. {I}. {U}pper
  bounds. {O}ptimally adaptive estimates. Teor. Veroyatnost. i Primenen.
  36~(4), 645--659.
\newline\urlprefix\url{http://dx.doi.org/10.1137/1136085}

\bibitem[{Lugosi and Mendelson(2017)}]{LugosiMendelson2016}
Lugosi, G., Mendelson, S., 2017. Risk minimization by median-of-means
  tournaments. Preprint available on ArXive:1608.00757.

\bibitem[{McDiarmid(1989)}]{MR1036755}
McDiarmid, C., 1989. On the method of bounded differences. In: Surveys in
  combinatorics, 1989 ({N}orwich, 1989). Vol. 141 of London Math. Soc. Lecture
  Note Ser. Cambridge Univ. Press, Cambridge, pp. 148--188.

\bibitem[{Mendelson(2014{\natexlab{a}})}]{Shahar-COLT}
Mendelson, S., 2014{\natexlab{a}}. Learning without concentration. In:
  Proceedings of the 27th annual conference on Learning Theory COLT14. pp. pp
  25--39.

\bibitem[{Mendelson(2014{\natexlab{b}})}]{MR3364699}
Mendelson, S., 2014{\natexlab{b}}. A remark on the diameter of random sections
  of convex bodies. In: Geometric aspects of functional analysis. Vol. 2116 of
  Lecture Notes in Math. Springer, Cham, pp. 395--404.

\bibitem[{Mendelson(2015{\natexlab{a}})}]{Shahar-ACM}
Mendelson, S., 2015{\natexlab{a}}. Learning without concentration. J. ACM
  62~(3), Art. 21, 25.
\newline\urlprefix\url{http://dx.doi.org/10.1145/2699439}

\bibitem[{Mendelson(2015{\natexlab{b}})}]{shahar_general_loss}
Mendelson, S., 2015{\natexlab{b}}. Learning without concentration for a general
  loss function. Tech. rep., Technion and ANU, Canberra.

\bibitem[{Mendelson(2016)}]{shahar_gafa_ln}
Mendelson, S., 2016. On multiplier processes under weak moment assumptions.
  Tech. rep., Technion.

\bibitem[{Nemirovsky and Yudin(1983)}]{MR702836}
Nemirovsky, A.~S., Yudin, D.~B., 1983. Problem complexity and method efficiency
  in optimization. A Wiley-Interscience Publication. John Wiley \& Sons, Inc.,
  New York, translated from the Russian and with a preface by E. R. Dawson,
  Wiley-Interscience Series in Discrete Mathematics.

\bibitem[{Rudelson and Vershynin(2014)}]{RV_small_ball}
Rudelson, M., Vershynin, R., 2014. Small ball probabilities for linear images
  of high dimensional distributions. Tech. rep., University of Michigan,
  international Mathematics Research Notices, to appear. [arXiv:1402.4492].

\bibitem[{Saba et~al.(2008)Saba, Hoffman, Hornbaker, Bhave, and
  Tabakoff}]{eQTL}
Saba, L., Hoffman, P.~L., Hornbaker, C., Bhave, S.~V., Tabakoff, B., 2008.
  Expression quantitative trait loci and the phenogen database 31~(3).

\bibitem[{Sart(2014)}]{MR3224298}
Sart, M., 2014. Estimation of the transition density of a {M}arkov chain. Ann.
  Inst. Henri Poincar\'e Probab. Stat. 50~(3), 1028--1068.
\newline\urlprefix\url{http://dx.doi.org/10.1214/13-AIHP551}

\bibitem[{Su and Cand{\`e}s(2015)}]{slope2}
Su, W., Cand{\`e}s, E.~J., 2015. Slope is adaptive to unknown sparsity and
  asymptotically minimax. Tech. rep., Stanford University, to appear in The
  Annals of Statistics.

\bibitem[{Vapnik(1998)}]{MR1641250}
Vapnik, V.~N., 1998. Statistical learning theory. Adaptive and Learning Systems
  for Signal Processing, Communications, and Control. John Wiley \& Sons, Inc.,
  New York, a Wiley-Interscience Publication.

\bibitem[{Vapnik and Chervonenkis(1974)}]{MR0474638}
Vapnik, V.~N., Chervonenkis, A.~Y., 1974. Teoriya raspoznavaniya obrazov.
  {S}tatisticheskie problemy obucheniya. Izdat. ``Nauka'', Moscow.

\end{thebibliography}


\end{document}